\theoremstyle{plain}
\newtheorem{theorem}{Theorem}[section]
\newtheorem{lemma}[theorem]{Lemma}
\newtheorem{claim}[theorem]{Claim}
\newtheorem{corollary}[theorem]{Corollary}
\newtheorem*{theorem*}{Theorem}
\newtheorem*{proposition*}{Proposition}
\theoremstyle{definition}
\newtheorem{definition}[theorem]{Definition}
\newtheorem*{definition*}{Definition}
\theoremstyle{remark}
\theoremstyle{plain}
\newcommand\vf{\varphi}
\DeclareMathOperator{\tp}{tp}
\newcommand\restrict{\upharpoonright}
\newcommand\scht{:}
\newtheorem{theorema}{Theorem}
\theoremstyle{definition}
\theoremstyle{remark}
\newtheorem{example}[theorem]{Example}
\newtheorem{question}[theorem]{Question}
\newtheorem{condition}[theorem]{Condition}
\newtheorem{convention}[theorem]{Convention}
\DeclareMathOperator{\dcl}{dcl}
\DeclareMathOperator\icl{icl}
\DeclareMathOperator\cl{cl}
\DeclareMathOperator\dom{dom}
\newcommand\proves{\vdash}
\newcommand\oF{{\tilde F}}
\newcommand\RR{\mathbb R}
\newcommand\QQ{\mathbb Q}
\begin{document}

\title[Definable functions continuous on curves in o-minimal structures]{
Definable functions continuous on curves in o-minimal structures}

\author{Janak Ramakrishnan}

\address{Centro de Matemática e Aplicações Fundamentais\\
Av. Prof. Gama Pinto, 2\\
1649-003 Lisboa\\
Portugal
}

\email{janak@janak.org}

\date{\today}

\subjclass[2000]{Primary 03C64;
Secondary 26B05}

\thanks{Thanks to Thomas Scanlon, my Ph.D. advisor, for discussing this problem
  with me, and for all his valuable help.  The central question of this paper
  was brought to my attention by Patrick Speissegger.  I would like to thank him
  and Leo Harrington for reading and commenting on various versions of the
  results.}

\begin{abstract}
We give necessary and sufficient conditions on a non-oscillatory curve in an o-minimal field such that, for any bounded definable function, the germ of the function on an initial segment of the curve can be continuously extended to a closed definable set.  This situation is translated into a question about types: What are the conditions on an $n$-type such that, for any bounded definable function, there is a definable set containing the type on which the function is continuous, and can be extended continuously to the set's closure?  All such types are definable, and we give the precise conditions that are equivalent to existence of a desired definable set.
\end{abstract}

\maketitle
\section{Introduction}

The study of o-minimal structures often encounters functions that are not
first-order definable in such structures.  These functions may be definable in
an o-minimal expansion of the original structure or lie in a Hardy field
extension of the field of germs of definable functions.  In this article, we
examine non-oscillatory curves in an o-minimal structure -- curves that may not be definable in the structure, but are ``well-behaved,'' in that their component functions do not oscillate with respect to the definable functions.  For example, $\langle t,e^t\rangle$ is non-oscillatory in $(\RR,+,\cdot,<,0,1)$, despite not being a definable curve, since $e^t$ does not oscillate with respect to any rational function.

We will answer a question that arose from an attempt to generalize Theorem 7.1 of \cite{Malgrange74}, on the existence of a formal solution to a differential
equation implying the existence of a $C^\infty$ solution with Taylor series the
formal solution.

\begin{question}\label{mainquestion}
Let $\bar\gamma$ be a given non-oscillatory curve.  For every bounded definable function $F$, is there a definable set $C$ containing an initial segment of $\bar\gamma$ such that $F\restrict C$ is continuous and extends continuously to $\cl(C)$?
\end{question}

The answer is not always ``yes,'' as shown by the curve $\langle t,-1/\ln t\rangle$ near $\bar 0$ and the function $\min(1,y/x)$ in the structure $(\RR,+,\cdot,<,0,1)$ (see Corollary \ref{curvecounterex}).

To answer this question, we use an elementary observation -- that any non-oscillatory curve in $n$ dimensions has associated to it a complete $n$-type -- to turn the question into one about types, namely: when is a type contained in a definable set on which the function is continuous and extends continuously to the closure?

The way that such a set containing a type can fail to exist is that, in some sense, the type lies in a ``gap'' between two regions on which the function takes very different values, but which share a common boundary point that is the limit of the type, in the sense of \cite{HrLo11}.

The right way to formalize these notions of ``gap'' and ``region'' comes from \cite{MaSt94}'s concept of ``scale'' (our terminology).  A type being out of scale with respect to a structure means that the image of this structure under any definable function is not cofinal or coinitial around the type in the base set of the type.  However, this concept turns out to be insufficient in the absence of certain guarantees on the order of the variables of the type.  To that end, this article introduces the notion of a ``decreasing type,'' which simplifies the use of scale.  Given a decreasing $n$-type over a set $A$ and realization of this type, $\bar c$, for each $i\le n$ we have an index $Q(i)\le i$ which gives the greatest coordinate $k\le i$ such that $\tp(c_i/A\bar c_{<k})$ is definable.

We restrict our discussion for the most part to finite types. A type is
\emph{finite} if its realizations are contained in a definable set. Corollary \ref{infinitetype} gives the result in the general case. In the case of original interest, where the objective was to extend a function to the endpoint of a non-oscillatory curve, the type is easily seen to be finite. We are then equipped to state our main theorem:

\begin{theorema}\label{maintheorem}
Let $M$ be an o-minimal field, and let $A\subseteq M$.  Let $p$ be a finite decreasing $n$-type over $A$.  Let $\bar c=\langle c_1,\ldots,c_n\rangle\models p$.  Then the following statements are equivalent:
\begin{enumerate}
\item\label{firstmain} For every $A$-definable bounded $n$-ary function, $F$, defined on $\bar c$, there is an $A$-definable set $C$ with $\bar c\in C$, such that $F\restrict C$ is continuous and extends continuously to $\cl(C)$.
\item\label{secondmain} There is $i_0\le n$ such that $\tp(c_i/A\bar c_{<i})$ is algebraic, definable, or out of scale on $A\bar c_{<Q(i)}$ for $i=i_0,\ldots,n$, and for $i<i_0$, $\tp(c_i/A)$ is not definable.
\end{enumerate}
\end{theorema}

Using our correspondence between types and curves (Lemma \ref{questionequiv}), we obtain our desired result:

\begin{theorema}\label{curvetheorem}
Let $M$ be an o-minimal field, and let $\bar\gamma(t)=\langle\gamma_1(t),\ldots,\gamma_n(t)\rangle$ be a (not necessarily definable) non-oscillatory bounded curve in $M^n$.  Then the following statements are equivalent:
\begin{enumerate}
\item\label{curveitem2} For every $A$-definable bounded $n$-ary function, $F$, defined on an initial segment of $\bar\gamma$, there is an $A$-definable set $C$ containing an initial segment of $\bar\gamma$, such that $F\restrict C$ is continuous and extends continuously to $\cl(C)$.
\item\label{*} The coordinates of $\bar\gamma$ can be reordered so that $\tp(\bar\gamma/M)$ is decreasing and there is $i_0\le n$ such that for any $\bar c\models\tp(\bar\gamma/M)$, the type $\tp(c_i/A\bar c_{<i})$ is algebraic, definable, or out of scale on $A\bar c_{<Q(i)}$ for $i=i_0,\ldots,n$, and for $i<i_0$, $\tp(c_i/A)$ is not definable.
\end{enumerate}
\end{theorema}

We note here that types satisfying \eqref{firstmain} of Theorem \ref{maintheorem} can be characterized using the framework of $T$-convexity developed in \cite{vdDLew95}. Statement \eqref{firstmain} for a type $p$ is equivalent to the following: if $\bar c\models p$, then the convex hull of $M$ in the prime model containing $M\bar c$ is given by elements of the form $F(\bar c)$, where $F$ is an $M$-definable, global continuous bounded $n$-ary function.\footnote{Thanks to an anonymous referee for pointing out this equivalence.} This equivalence is easily seen since a continuous function on a closed set can be definably extended to a global continuous function. Note that the convex hull of $M$ is a $T$-convex subring of the prime model of $M\bar c$. Thus, Statement \eqref{secondmain}, or, more precisely, \eqref{coritemtype} of Corollary \ref{infinitetype}, characterizes types which have this convexity property.

The structure of the paper is as follows.  In Section \ref{s-exploring}, we
state the question of the paper and reduce it from one about curves to one about types.  In Section \ref{s-preliminaries}, we obtain some basic results on
o-minimal types, following \cite{Marker86}. In Section \ref{s-scale}, we define
scale, coming from some concepts in \cite{MaSt94}, and define the notion of a
decreasing type, which makes scale more useful. Section \ref{goodbound} defines
a set of points that will be in any closed set containing a given type. Finally, in Section \ref{s-theorem} we prove Theorem \ref{maintheorem}, and get Theorem \ref{curvetheorem} as a corollary.

Throughout, we fix an o-minimal structure, $M$, expanding a real closed field, with language $L$ expanding $(<,+,\cdot,0,1)$.  All structures are  assumed to be embedded in a monster model, $\mathcal C$, in which lie all elements and sets.  ``Definable'' means ``definable with parameters in $M$.''  Tuples (of elements or functions) will be indicated by a bar above the symbol.  Subscripts, like $x_i$, indicate the $i$-th coordinate of $\bar x$, and $\bar x_{<i}$ is the tuple $\langle x_1,\ldots,x_{i-1}\rangle$. Similarly for $\bar x_{\le i}$, $\bar x_{>i}$, and $\bar x_{\ge i}$.  We will do the same for a function with image in $M^n$, writing $\gamma_i$ to mean the $i$-th component of $\bar\gamma$.  We let $\pi_{<i}$ denote projection onto the first $i-1$ coordinates. Given a function $f:\mathcal C^{n+1}\to\mathcal C^k$ and an $n$-tuple $\bar c$, if $f(\bar c,-)$ is injective, we define $f^{-1}_{\bar c}(x)$ to be the unique $y$ such that $f(\bar c,y)=x$.  For $A\subseteq\mathcal C^{m+n}$ and $\bar a\in\pi_{\le m}(A)$, let $A_{\bar a}=\{\bar y\in\mathcal C^n\scht \langle\bar a,\bar y\rangle\in A\}$.

A ``curve'' is a continuous (though not necessarily definable) map from $(0,t_0)\cap M$ to $M^n$ for some $n$ and some $t_0\in M_{>0}$.  We denote the topological closure of a set $A$ by $\cl(A)$.  If $A$ is a set, $\Pr(A)$ is the prime model of the theory of $M$ containing $A$.  If $N$ is a model and $A$ is a set, $N\langle A\rangle$ denotes $\Pr(N\cup A)$.

\section{Reducing to types}\label{s-exploring}

\begin{definition} Let $\bar\gamma$ be a (not necessarily definable) curve in $M^n$.  Say that $\bar\gamma$ is \emph{non-oscillatory} if, for each definable function $f:M^n\to M$, there exists $t_f\in M_{>0}$ such that either $f(\bar\gamma(t))=0$ for all $t\in
(0,t_f)$ or $f(\bar\gamma(t)) \ne 0$ for all $t\in (0,t_f)$.
\end{definition}

We are now equipped to ask Question \ref{mainquestion} -- given a
non-oscillatory curve and a bounded definable function, is there a definable set containing an initial segment of the curve on which the function is continuous and extends continuously to the closure of the set?

We examine the behavior of a non-oscillatory curve in $M$ more closely.

\begin{definition}
Let $\bar\gamma$ be a non-oscillatory curve in $M^n$.  Let $\tp(\bar\gamma/M)$ denote
\begin{equation*}
\{\vf(\bar x)\in L(M)\scht\text{There is $s(\vf)\in M_{>0}$ such that for all $t\in(0,s(\vf))$, }M\models\vf(\bar\gamma(t))\}.
\end{equation*}
\end{definition}

\begin{lemma}\label{lemFunctionType}
$\tp(\bar\gamma/M)$ is a complete $n$-type over $M$.
\end{lemma}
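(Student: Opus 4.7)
The plan is to verify the two assertions—consistency and completeness—separately, with only completeness invoking the non-oscillatory hypothesis. Consistency, in fact finite satisfiability realized within $M$ itself, is essentially tautological from the definition: given any $\vf_1(\bar x),\ldots,\vf_k(\bar x)$ proposed to lie in $\tp(\bar\gamma/M)$, choose $s_i>0$ so that $\vf_i(\bar\gamma(t))$ holds on $(0,s_i)$, let $s=\min_i s_i$, and observe that for any $t\in(0,s)$ the tuple $\bar\gamma(t)\in M^n$ realizes $\bigwedge_i\vf_i$.

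For completeness, the real work—though still light—is converting the non-oscillatory property, stated for zero-sets of $M$-definable functions, into a dichotomy for arbitrary formulas over $M$. Given such a formula $\vf(\bar y)$, I would introduce the $M$-definable function $f:M^{n+1}\to M$ given by $f(t,\bar y)=0$ if $\vf(\bar y)$ holds and $f(t,\bar y)=1$ otherwise. Applying the non-oscillatory hypothesis to $f$ then yields $t_f>0$ on which $f(t,\bar\gamma(t))$ is either identically zero or never zero, giving $\vf\in\tp(\bar\gamma/M)$ or $\neg\vf\in\tp(\bar\gamma/M)$ respectively.

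The step most worth flagging—if one must identify a main obstacle—is precisely this reduction from formulas to functions: one must confirm that the two-valued characteristic function of a definable set is itself a legitimate $M$-definable function in the sense of the non-oscillatory definition. Since definable sets in any structure are closed under piecewise Boolean combinations, and the parameters of $\vf$ lie in $M$ as the paper's convention on ``definable'' allows, this is immediate; but it is the one conceptual bridge in an otherwise bookkeeping argument.
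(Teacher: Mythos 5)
The proposal is correct, and it takes a genuinely different and considerably shorter route than the paper's. The paper proves completeness by cell decomposition and an induction on the number of variables: it writes $\vf$ as a finite union of cells, invokes the inductive hypothesis on $\exists x_k\vf$ to determine over which of the projected cells $\bar\gamma_{<k}$ eventually lies, and then applies the non-oscillatory hypothesis to the boundary functions $f^{i_j},g^{i_j}$ of those cells (together with continuity of $\bar\gamma$ and of the bounding functions) to fix the eventual position of $\gamma_k$ among them. Your observation that the two-valued characteristic function of $\{\bar y : \neg\vf(\bar y)\}$ is itself an $M$-definable map $M^{n+1}\to M$, so that a single application of the non-oscillatory condition delivers the dichotomy $\vf\in\tp(\bar\gamma/M)$ or $\neg\vf\in\tp(\bar\gamma/M)$ outright, collapses all of this to one step and makes explicit that the non-oscillation condition, as stated in the paper, is really a condition on definable \emph{sets} rather than on functions. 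The only extra input is the need for two distinct values for the characteristic function, which is automatic since $M$ is infinite (and in any case, in Section 2 the paper is working over an expansion of a real closed field). One thing the paper's longer argument buys is robustness: it would survive under a more restrictive reading of ``definable function'' (for instance, one requiring continuity or smoothness), whereas the characteristic-function shortcut depends on the definition as actually written being quantified over arbitrary definable functions; but the stated definition imposes no such restriction, so your argument is a complete proof.
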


\begin{proof}
For any finite set of formulas $\vf_1,\ldots,\vf_m\in\tp(\bar\gamma/M)$, let
$s=\min_{i\le m}\{s(\vf_i)\}$.  Then for $t\in(0,s)$, we have
$M\models\vf_i(\bar\gamma(t))$ for $i=1,\ldots,m$.  This implies consistency.
It remains to show completeness.  Consider any formula, $\vf(\bar x)$.  By cell
decomposition, $\vf$ is equivalent to a disjunction of cell definitions, say
$\bigvee_{i=1}^m \bar x\in C_i$.  We may suppose by induction on $n$ that
$\exists x_n\vf(\bar x)$ is determined by $\tp(\bar\gamma/M)$.  If it is not in
$\tp(\bar\gamma/M)$, then clearly $\vf$ is not either, so we may suppose that it
is.  Since $\exists x_n\vf(\bar x)$ defines the set
$\bigvee_{i=1}^m\pi_{<n}(C_i)$, we must have that $\bar\gamma_{<n}(t)$ lies in
$\pi_{<n}(C_i)$ for some $i\le m$ and all $t\in(0,s)$, for some $s\in M_{>0}$.
Let the $n$-th coordinate cell definition of $C_i$ be given by $(f^i,g^i)$ (if the $n$-th coordinate is given by $\{f_i\}$, the argument is similar).  If
the ordering of $\gamma_n$ in the set $\{f^i(\bar\gamma_{<n}),g^i(\bar\gamma_{<n})\}$ is determined, then we are done.  But $\bar\gamma$ is non-oscillatory, which is sufficient.
\end{proof}

\begin{lemma}\label{questionequiv}
Let $\bar\gamma$ be a non-oscillatory curve in $M^n$.  The following conditions
are equivalent:
\begin{enumerate}
\item For any bounded definable function $F$ defined on an initial segment of $\bar\gamma$, there exists a definable set $C$ containing an initial segment of $\bar\gamma$ such that $F\restrict C$ is continuous and extends continuously to $\cl(C)$.
\item For any bounded definable function $F$ defined on a realization of $\tp(\bar\gamma/M)$, there exists a definable set $C$ containing $\tp(\bar\gamma/M)$ such that $F\restrict C$ is continuous and extends continuously to $\cl(C)$.
\end{enumerate}
\end{lemma}
\begin{proof}
By the definition of $\tp(\bar\gamma/M)$, for $C$ any definable set, $\tp(\bar\gamma/M)\proves\bar x\in C$ if and only if $\bar\gamma((0,s))\subseteq C$ for some $s\in M_{>0}$. Then apply this with $C$ the desired definable set containing an initial segment of $\bar\gamma$, or conversely containing $\tp(\bar\gamma/M)$.
\end{proof}

We can then reformulate Question \ref{mainquestion} for types: given a type $p$, is it true that for every bounded definable function $F$ defined on $p$, there is a definable set $C$ containing $p$ such that $F\restrict C$ continuously extends to $\cl(C)$?  But it is not hard to construct an example where no such $C$ exists.

\begin{example}\label{nearscaleexample}
Let $M=(\RR,+,\cdot,<,0,1)$, the reals as an ordered field. Let $p(x_1,x_2)$ be the type generated by the formulas $0<x_1<a$, $0<x_2<ax_1$, and $ax_1^q<x_2$, for $a\in\RR_+$, $q\in\QQ_{>1}$.  Let $F(x_1,x_2)$ be the function $\min(x_2/x_1,1)$ defined on the open first quadrant.
\end{example}

\begin{claim}
The type $p$ is consistent and complete, and if $D$ is any definable set containing the realizations of $p$, then $F\restrict D$ does not extend continuously to $\cl(D)$.
\end{claim}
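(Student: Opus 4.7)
The strategy is, first, to verify that $p$ is consistent and complete; second, to show that for any definable $D$ containing a realization of $p$, the point $(0,0)$ lies in $\cl(D)$ and $F$ admits two distinct limits at $(0,0)$ along $D$, precluding a continuous extension. For consistency, given any finite subset of the generators with parameters $a_1,\ldots,a_k\in\RR_{>0}$ and exponents $q_1,\ldots,q_k\in\QQ_{>1}$, pick a rational $r\in(1,\min_i q_i)$ and take $(x_1,x_2)=(\epsilon,\epsilon^r)$ for $\epsilon>0$ sufficiently small; each of the finitely many formulas is then satisfied. For completeness, since $\RR\sat T$ is RCF, a type over $\RR$ is determined by the cut its realization induces in the definable closure. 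The cut $\tp(x_1/\RR)$ is the standard positive-infinitesimal cut. Each element of $\dcl(\RR x_1)$ has the form $f(x_1)$ for a semi-algebraic $f$, and by o-minimality admits a Puiseux expansion $f(x_1)\sim c\,x_1^r$ ($c\in\RR^\times$, $r\in\QQ$) at $0^+$; a brief case analysis on $(c,r)$ against the generators of $p$ fixes the sign of $x_2-f(x_1)$: if $c<0$, or $c>0$ with $r>1$, then $f(x_1)<x_2$ (the latter via $x_2>x_1^q$ for some rational $q\in(1,r)$), while if $c>0$ with $r\le 1$, then $f(x_1)\ge(c/2)x_1^r\ge(c/2)x_1>x_2$ since $x_2<(c/2)x_1$. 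Hence $\tp(x_2/\RR x_1)$, and so $p$, is complete.

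Next, let $D$ be definable and contain a realization $\bar c$ of $p$. By cell decomposition and the completeness of $p$, I replace $D$ by the unique cell $C\subseteq D$ containing $\bar c$. Since no element of $\dcl(\RR x_1)$ realizes the cut of $x_2$ in $p$, $C$ cannot be a graph over $x_1$, so $C=\{0<x_1<\delta,\ f(x_1)<x_2<g(x_1)\}$ for some definable $f<g$. Applying the same Puiseux dichotomy to $f$ and $g$: $g$'s leading term is $c_g x_1^{r_g}$ with $c_g>0$ and $r_g\le 1$ (or $g\equiv+\infty$), so $g(x_1)\ge\alpha x_1$ for some $\alpha>0$ on a right-neighborhood of $0$; while $f$ is either eventually $\le 0$ or has leading exponent $r_f>1$, so $f(x_1)<\beta x_1$ for every $\beta>0$ on a right-neighborhood of $0$. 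Moreover, $(0,0)\in\cl(C)$ by transfer: for each standard $\eta>0$ the formula ``$C$ contains a point in $(0,\eta)^2$'' is witnessed in $\mathfrak C$ by $\bar c$, and hence holds in $\RR$.

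Finally, I exhibit two continuous curves in $C$ converging to $(0,0)$ with distinct $F$-limits. For the first, fix a standard $\beta\in(0,\min(\alpha,2))$ and take $t\mapsto(t,\beta t/2)$; the bounds on $f,g$ place this curve in $C$ for small $t>0$, and $F(t,\beta t/2)\equiv\beta/2>0$. For the second, choose a rational $q\in(1,r_f)$ (or any $q>1$ if $f$ is eventually $\le 0$) and take $t\mapsto(t,t^q)$; this too lies in $C$ for small $t>0$, and $F(t,t^q)=t^{q-1}\to 0$. Any continuous extension of $F\restrict C$ to $\cl(C)$, and hence of $F\restrict D$ to $\cl(D)\supseteq\cl(C)$, would then have to assign $(0,0)$ two distinct values, which is impossible. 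I expect the main obstacle to be the Puiseux dichotomy yielding the simultaneous linear lower bound on $g$ and strictly sublinear upper bound on $f$; once that is secured, the two-curve construction is mechanical.
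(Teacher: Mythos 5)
Your proof is correct and takes essentially the same route as the paper: decompose $D$ into cells, pass to the cell whose definition lies in $p$, and use the fact that one-variable definable functions in this structure are asymptotic to rational powers at $0^+$ (the paper cites Miller's Theorem 4.6, you invoke Puiseux expansions) to force the upper boundary function to dominate a linear function while the lower boundary is below every linear function, producing two incompatible limits of $F$ at the origin. The only differences are minor: you write out the consistency/completeness verification that the paper leaves as routine, and you extract the two distinct limits along curves lying inside the cell rather than along the boundary graphs themselves.
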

\begin{proof}
We leave verification of $p$'s consistency and completeness as routine.  For the last statement, we may suppose that $D$ is a cell. Note that $D$ must be open.  Let the cell definition of $D$ be given by $(f_1,g_1)$, $(f_2,g_2)$, where $f_1$ and $g_1$ are constants.  Note that $f_1\le 0$.  By Theorem 4.6 of \cite{Miller94}, $f_2(x_1)$ and $g_2(x_1)$ asymptotically approach rational
powers of $x_1$ as $x_1$ goes to $0$.  Since $p$ requires that $x_2$ is greater than
$x_1^q$ for any rational $q>1$, the function $g_2(x_1)$ must approach a rational power of $x_1$ with exponent at most $1$.  Similarly, since $p$ requires that $x_2$ is less than $ax_1$ for any positive $a\in\RR$, the function $f_2(x_1)$ must approach a rational power of $x_1$ with exponent greater than $1$.  But then $F(x_1,f_2(x_1))$ and
$F(x_1,g_2(x_1))$ have different limits as $x_1$ goes to $0$.  Since the sets
$\{\bar x\scht x_1\in\pi_1(D)\land x_2=f_2(x_1)\}$ and $\{\bar x\scht x_1\in\pi_1(D)\land x_2=g_2(x_1)\}$ are in $\cl(D)$, it is impossible for $F\restrict D$ to extend continuously to $\bar 0$ on $\cl(D)$.
\end{proof}

\begin{corollary}\label{curvecounterex}
With $M$ and $F$ as above, if $\bar\gamma$ is the curve $\langle t,-t/\ln
t\rangle$, there is no definable set $D$ containing an initial segment of $\bar\gamma$ on which $F\restrict D$ is continuous and extends continuously to $\cl(D)$.
\end{corollary}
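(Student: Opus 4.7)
The plan is to reduce the corollary to the claim in Example~\ref{nearscaleexample} via Lemma~\ref{questionequiv}. Two things need to be verified about $\bar\gamma(t)=\langle t,-t/\ln t\rangle$: that it is non-oscillatory, so that $\tp(\bar\gamma/M)$ is a well-defined complete type by Lemma~\ref{lemFunctionType}; and that $\tp(\bar\gamma/M)$ is exactly the type $p$ of the example.

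For non-oscillation, I will use that $-t/\ln t$ lies in the Hardy field $\mathcal H$ of germs at $0^+$ of real exp-log functions. Evaluating any semialgebraic $f\colon\RR^3\to\RR$ along $(t,t,-t/\ln t)$ therefore gives an element of $\mathcal H$, which is either identically zero near $0^+$ or eventually of constant nonzero sign; this is precisely the non-oscillation condition in the paper's sense.

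To identify $\tp(\bar\gamma/M)$ with $p$, I will check each generator of $p$ on an initial segment of $\bar\gamma$. The bounds $x_1>0$ and $x_1<a$ for $a\in\RR_+$ are immediate. The inequality $0<x_2<x_1$ holds once $-\ln t>1$, i.e.\ for $t<e^{-1}$. The inequality $x_2<ax_1$ for $a\in\RR_+$ rearranges to $-1/\ln t<a$, i.e.\ $t<e^{-1/a}$. Finally, $ax_1^q<x_2$ for $a\in\RR_+$, $q\in\QQ_{>1}$, rearranges to $a t^{q-1}(-\ln t)<1$, which holds for all sufficiently small $t$ because $t^{q-1}(-\ln t)\to 0$ as $t\to 0^+$. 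Since $p$ is already known (from Example~\ref{nearscaleexample}) to be complete, this equality of types follows.

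Combining these verifications with the claim in Example~\ref{nearscaleexample} and Lemma~\ref{questionequiv}, no definable $C$ whose closure contains an initial segment of $\bar\gamma$ can have $F\restrict C$ extending continuously to $\cl(C)$. Consequently, any putative closed definable $D$ containing an initial segment of $\bar\gamma$ on which $F$ is continuous would give such a $C$ (take $C=D$), a contradiction. I do not anticipate a serious obstacle; the only mildly technical step is the asymptotic $t^{q-1}(-\ln t)\to 0$ used to control the lower bound of $p$, which is a routine exp-log estimate.
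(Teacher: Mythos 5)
Your proof is correct and follows the paper's approach exactly: the paper's one-line proof is simply ``$\tp(\bar\gamma/M)$ is $p$ in Example~\ref{nearscaleexample},'' and you have filled in precisely the verification this assertion requires (non-oscillation of $\bar\gamma$, checking each generator of $p$ along the curve, and invoking completeness of $p$). One minor correction: the reduction from the corollary to the claim in Example~\ref{nearscaleexample} really uses Lemma~\ref{tpcurveequiv} (a definable $D$ containing an initial segment of $\bar\gamma$ must contain all realizations of $p$), not Lemma~\ref{questionequiv}, and ``closed set'' in the corollary should be read as ``closed \emph{definable} set,'' as you implicitly do.
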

\begin{proof}
$\tp(\bar\gamma/M)$ is $p$ in Example \ref{nearscaleexample}.
\end{proof}

We may ask, then, for necessary and sufficient conditions on $p$, an $n$-type in an o-minimal field, so that, for any $F$, a bounded definable function
 on $p$, there is a definable set $C$ containing $p$ such that $F$ is continuous on $C$ and $F\restrict C$ extends continuously to $\cl(C)$.  In order to characterize such types, we will need to extend a classification of o-minimal types developed by \cite{MaSt94}.

\section{O-minimal background}\label{s-preliminaries}

Before we begin to present any new machinery, we will need to state some basic
results that follow from \cite{Marker86} and \cite{vdD98book}.  We use here the results of \cite{Marker86} but follow some of the terminology of \cite{Tressl05a}: the definable non-algebraic $1$-types are called ``principal.''  To each principal type over a set $A$ is associated a unique element $a\in\dcl(A)\cup\{\pm\infty\}$ to which it is ``closest.''  We say that a principal type is ``principal above/below/near $a$.''  The results of
\cite{Marker86} and \cite{vdD98book} will be used freely -- the reader is referred there for background.

\begin{lemma}\label{boundabove}
Let $c_1,c_2$ be principal over $A$, near $\beta_1,\beta_2\in
\dcl(A)\cup\{\pm\infty\}$ respectively.  If $c_1$ is non-principal over $c_2A$, then there is some $A$-definable function $f(x)$ such that
$\lim_{x\to\beta_1}f(x)=\beta_2$ and $c_2$ lies between $f(c_1)$ and
$\beta_2$.
\end{lemma}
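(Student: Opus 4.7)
The plan is to extract from the non-principality of $\tp(c_1/Ac_2)$ an $A$-definable function $g$ whose value at $c_2$ separates $\beta_1$ from $c_1$, and then to take $f$ to be an inverse of $g$.

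Assume without loss of generality that $c_1$ approaches $\beta_1$ from above and $c_2$ approaches $\beta_2$ from above; the remaining sign combinations and the cases $\beta_i = \pm\infty$ are symmetric. Since $\tp(c_1/Ac_2)$ is non-principal, the cut that $c_1$ induces on $\dcl(Ac_2)$ has no maximum below it. As $\beta_1 \in \dcl(A) \subseteq \dcl(Ac_2)$ and $\beta_1 < c_1$, this forces the existence of an element of $\dcl(Ac_2)$ strictly between $\beta_1$ and $c_1$, and any such element has the form $g(c_2)$ for some $A$-definable $g$; so $\beta_1 < g(c_2) < c_1$.

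Next I would show $\lim_{y \to \beta_2^+} g(y) = \beta_1$. The formula $g(y) > \beta_1$ lies in $\tp(c_2/A)$ and so, by principality of $\tp(c_2/A)$, holds on some right-neighborhood of $\beta_2$. For every $A$-definable $d > \beta_1$, the inequality $c_1 < d$ (from $\tp(c_1/A)$ being principal near $\beta_1$) together with $g(c_2) < c_1$ forces $g(y) < d$ into $\tp(c_2/A)$ as well, so this too holds on a right-neighborhood of $\beta_2$. The $A$-definable right-limit of $g$ at $\beta_2$, which exists by o-minimality, is then at least $\beta_1$ but below every $A$-definable element above $\beta_1$, hence is exactly $\beta_1$.

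Finally I would invert. By o-minimality, $g$ is monotone on some right-neighborhood $(\beta_2, \beta_2 + \epsilon)$; since $g > \beta_1$ there while $g(\beta_2^+) = \beta_1$, $g$ must be strictly increasing. Set $f = g^{-1}$ on the corresponding right-neighborhood of $\beta_1$; then $\lim_{x \to \beta_1^+} f(x) = \beta_2$, and applying the increasing $f$ to $g(c_2) < c_1$ yields $c_2 < f(c_1)$. Combined with $c_2 > \beta_2$, this places $c_2$ strictly between $\beta_2$ and $f(c_1)$, as required. The only step I expect to need real care is the first one, where one must verify that non-principality produces a separating value with the correct orientation relative to $c_1$; this comes precisely from the failure of the lower cut to have a maximum.
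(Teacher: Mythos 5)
Your argument reproduces the paper's proof essentially step for step: use non-principality of $\tp(c_1/Ac_2)$ to extract an $A$-definable $g$ with $\beta_1 < g(c_2) < c_1$, show $\lim_{y\to\beta_2^+} g(y)=\beta_1$ and that $g$ is increasing on a right-neighborhood of $\beta_2$, and take $f=g^{-1}$. The only thing to tighten is the order of the last two steps: to pass from ``$g(y)<d$ near $\beta_2^+$'' to ``$\lim g < d$'' (rather than just $\le d$) you already need $g$ increasing there, so that $\lim g < g(y) < d$; thus the monotonicity observation should precede, not follow, the limit computation --- though the paper's own proof has the same minor wrinkle.
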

\begin{proof}
We suppose that $c_1,c_2$ are above finite $\beta_1,\beta_2$, respectively -- the proof is similar for the other possibilities.  Since $c_1$ is non-principal over
$c_2A$, there is some $A$-definable $g$ such that $\beta_1<g(c_2)<c_1$.  Let $f(x)=g^{-1}(x)$.  We show that $g$ is increasing on an interval above $\beta_2$.  If it were constant, this would imply that $g(c_2)$ is $A$-definable, which would contradict $c_1$ being principal over $A$.  If it were decreasing, we could restrict $g$ to an interval above $\beta_2$ on which it was continuous and decreasing, let $\delta$ be the right endpoint of the image of $g$, and then consider $f((\beta_1+\delta)/2)$, which would lie between $\beta_2$ and $c_2$, contradicting $c_2$ being principal over $A$. Then $f(c_1)>c_2$. Similarly, $\lim_{x\to\beta_2^+}g(x)=\beta_1$, or else either this limit or $\lim_{x\to\beta_1^+}f(x)$ would contradict either $c_1$ or $c_2$ being principal over $A$, respectively.  Thus $\lim_{x\to\beta_1^+}f(x)=\beta_2$.
\end{proof}

\begin{lemma}\label{fiberclosure}
Let $S$ be a definable set in $\mathcal C^{m+n}$.  Let $S'=\{\bar x\in\mathcal C^{m+n}\scht\exists\bar a\in\pi_{\le m}(S)(\bar x\in\cl(\{\bar a\}\times S_{\bar a}))\}$.  Then there is a partition of $\mathcal C^m$ into definable subsets $A_1,\ldots,A_k$ such that $S'\cap(A_i\times \mathcal C^n)=\cl(S)\cap (A_i\times\mathcal C^n)$, for $i=1,\ldots,k$.  In other words, the closure of a fiber is the fiber of the closure.
\end{lemma}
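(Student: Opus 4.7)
The plan is to rely on parametrized cell decomposition. I will apply cell decomposition to $\mathfrak C^{m+n}$ compatibly with both $S'$ and $\cl(S')$, obtaining a finite partition $\{D_j\}$ of $\mathfrak C^{m+n}$ into cells such that $S'$ is a union of some $D_j$'s and $\cl(S')$ is a union of a (larger) collection of $D_j$'s. Projecting via $\pi_{\le m}$ yields finitely many definable subsets of $\mathfrak C^m$; I refine these into a common cell decomposition $A_1, \ldots, A_k$ of $\mathfrak C^m$ such that each $\pi_{\le m}(D_j)$ is a union of some $A_i$'s. Thus for each $A_i$ and each $D_j$, either $A_i \subseteq \pi_{\le m}(D_j)$ or $A_i \cap \pi_{\le m}(D_j) = \emptyset$.

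On each $A_i$ the argument proceeds cell by cell. For each cell $C$ of $S'$'s decomposition with $A_i \subseteq \pi_{\le m}(C)$, the fiber $C_{\bar a}$ for $\bar a \in A_i$ is itself a cell in $\mathfrak C^n$ whose cell-defining functions depend continuously on $\bar a$. I will verify the identity $\cl(C)_{\bar a} = \cl(C_{\bar a})$ for $\bar a \in A_i$ by exploiting this continuity: the closure of a fiber is obtained by replacing strict inequalities in its cell-definition with non-strict ones, and the fiber of the closure of $C$ admits the same description provided the defining functions are continuous on $A_i$. Summing the cell-wise equalities over the cells $C$ comprising $S'$ yields $\cl(S')_{\bar a} = \cl(S'_{\bar a}) = S_{\bar a}$ for every $\bar a \in A_i$, which is exactly the desired fiberwise agreement on $A_i \times \mathfrak C^n$.

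The main obstacle will be the cells $A_i$ that lie on the boundary of some $\pi_{\le m}(C)$ in $\mathfrak C^m$: there, the cell-defining functions of $C$ need not extend continuously to $A_i$ without further refinement, and the fiber of the closure can pick up contributions from neighboring fibers. To control this I plan to invoke the o-minimal uniformity of families of cells: for any definable family of cells parametrized by a definable set $X$, there is a cell decomposition of $X$ such that, on each piece, the defining functions extend continuously (possibly to $\pm\infty$) to the relative closure of the piece. Applying this uniformity to the family of fibers of each $D_j$ produces the refinement of the $A_i$'s needed so that the cell-wise identity of the previous paragraph holds on every piece of the final partition.
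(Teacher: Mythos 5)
Your route is genuinely different from the paper's, which is a one-line appeal to the fiberwise-to-piecewise transfer (Corollary 2.3 of Chapter 6 of \cite{vdDries98}) applied to the pair $S\subseteq\cl(S')$: each fiber $S_{\bar a}=\cl(S'_{\bar a})$ is closed in $\cl(S')_{\bar a}$, so after partitioning the base one gets $S\cap(A_i\times\mathfrak C^n)$ closed in $\cl(S')\cap(A_i\times\mathfrak C^n)$, and hence $\cl(S'\cap(A_i\times\mathfrak C^n))_{\bar a}=\cl(S'_{\bar a})$ for $\bar a\in A_i$, which is the form of the statement the paper actually uses later (for the pieces $C^0,\ldots,C^r$). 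Your cell-by-cell argument has a genuine gap: the central claim, that continuity of the cell-defining functions gives $\cl(C)_{\bar a}=\cl(C_{\bar a})$ for $\bar a$ in the base of a cell $C$, is true when the fiber dimension is $1$ but false for fiber dimension $\ge 2$, and it fails at points \emph{interior} to $\pi_{\le m}(C)$, not only on its boundary as you assume. For example, with $m=1$, $n=2$, let $g(x,y_1)=\max\bigl(y_1,\min(1,|x-\tfrac12|/y_1)\bigr)$, which is continuous and positive on $(0,1)^2$, and let $C=\{(x,y_1,y_2)\mid 0<x<1,\ 0<y_1<1,\ 0<y_2<g(x,y_1)\}$, a single open cell. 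The points $(\tfrac12+\tfrac1k,\tfrac1k,\tfrac12)\in C$ converge to $(\tfrac12,0,\tfrac12)$, so $(0,\tfrac12)\in\cl(C)_{1/2}$, whereas $\cl(C_{1/2})=\{(y_1,y_2)\mid 0\le y_2\le y_1\le 1\}$ omits it. Refining the base partition changes neither $\cl(C)$ nor $C_{1/2}$, so no choice of the $A_i$ rescues the cellwise identity at $x=\tfrac12$; the only cure is to pass to the closure of the restricted set $C\cap(A_i\times\mathfrak C^n)$, a move your argument never makes (and which is exactly what the paper's method and its later application provide).

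The repairs you propose do not close this gap. The ``uniformity of families of cells'' principle you invoke---that after refining the parameter space the defining functions extend continuously (allowing $\pm\infty$) to the relative closure---is not a quotable standard fact; in its naive multivariable form it is false (no finite partition of $(0,1)^2$ makes $\min(1,y/x)$ extend continuously to the closures of the pieces near the origin), and in the form actually needed it is essentially the fiberwise-to-piecewise machinery of Chapter 6, \S 2 of \cite{vdDries98}, i.e.\ at least as strong as the lemma you are proving, so invoking it is circular. Moreover, your ``summing over the cells comprising $S'$'' step uses the cellwise identity only for cells whose base contains $A_i$, but cells whose base is disjoint from $A_i$ can still contribute frontier points to $\cl(S')\cap(A_i\times\mathfrak C^n)$ (the triangle $0<y<x<1$ contributes $(0,0)$ over the base point $0$); these contributions are precisely what must be controlled, and they are invisible to your cell-by-cell bookkeeping. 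If you want a decomposition-style proof, prove the restricted-closure statement $\cl(S'\cap(A_i\times\mathfrak C^n))_{\bar a}=\cl(S'_{\bar a})$ by induction on $n$, using the one-fiber-variable case (where your continuity argument is correct) together with the fiberwise-to-piecewise transfer, rather than the global closure $\cl(S')$.
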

\begin{proof}
  $S'$ and $\cl(S)$ satisfy the conditions of Corollary 2.3, Chapter
  6, of \cite{vdD98book}, with $A=\mathcal C^m$, so we can find
  $A_1,\ldots,A_k$ such that $S'\cap(A_i\times\mathcal C^n)$ is closed
  in $\cl(S)\cap(A_i\times\mathcal C^n)$, which implies that the two
  sets are equal, for each $i=1,\ldots,k$
\end{proof}

\section{Scale and decreasing types}\label{s-scale}

The notion of a ``region'' in the discussion after Question \ref{mainquestion} is closely related to a concept that was first defined in \cite{MaSt94}, although not formally named.

\begin{definition}
Let $p=\tp(a/B)$ be non-principal, with $A\subset B$. Let $p$ be \emph{out of scale on $A$} if for every unary $B$-definable function $f$, the set $f(\Pr(A))$ is neither cofinal nor coinitial at $a$ in $\Pr(B)$.
\end{definition}

Marker and Steinhorn \cite{MaSt94} obtained the following theorem.

\begin{theorem}\label{ms9421}\textup{(\cite{MaSt94}, Theorem 2.1)}
Let $p\in S_n(M)$.  Then $p$ is definable if and only if for any $\bar c$ realizing $p$, $M\langle\bar c\rangle$ realizes only principal types over $M$.
\end{theorem}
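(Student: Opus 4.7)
The two directions are quite asymmetric. The forward direction unpacks the definitions more or less directly, while the reverse is the substantive content (this is the heart of \cite{MarkerSteinhorn94}, so my plan is to follow their approach).

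For $\Rightarrow$, suppose $p$ is definable. Any $d\in M(\bar c)$ can be written as $d=f(\bar c)$ for some $M$-definable function $f$, using the Skolem functions available in o-minimal $T$. Then $\{m\in M:d>m\}$ is the set of $m$ such that $(f(\bar x)>m)\in p$, which is $M$-definable by definability of $p$. Since this is a downward-closed $M$-definable subset of $M$, it takes the form $(-\infty,a)\cap M$ or $(-\infty,a]\cap M$ for some $a\in M\cup\{+\infty\}$. Thus the cut of $d$ in $M$ is rational: either $d=a$ or $d$ is infinitesimally close (above or below) to $a$. This is precisely the statement that $\tp(d/M)$ is principal.

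For $\Leftarrow$, I would first establish the following \emph{Key Lemma}: every $M(\bar c)$-definable subset of $M^k$ is $M$-definable. This immediately yields definability of $p$, for given any formula $\vf(\bar x,\bar y)$, the set $\{\bar b\in M^m:\vf(\bar c,\bar b)\}$ is an $M(\bar c)$-definable subset of $M^m$, hence $M$-definable. The Key Lemma proceeds by induction on $k$. For $k=1$, an $M(\bar c)$-definable subset of $M$ is a finite union of points and intervals with endpoints in $\dcl(M(\bar c))\cup\{\pm\infty\}$; by hypothesis each endpoint has principal type over $M$, so lies in $M\cup\{\pm\infty\}$ or in the monad of some point there, and in either case the trace on $M$ is $M$-definable.

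The main obstacle is the inductive step. Given an $M(\bar c)$-definable $X\subseteq M^k$, cell decomposition writes $X$ as a union of cells with boundary functions $f,g$ of $\bar x_{<k}$ valued in $M(\bar c)\cup\{\pm\infty\}$. The principal position of $f(\bar a)$ relative to $M$ (which element of $M\cup\{\pm\infty\}$ the value equals, or lies in the monad of) can vary with $\bar a\in M^{k-1}$, so the heart of the argument is to uniformize this behavior. I would partition $M^{k-1}$ into finitely many $M(\bar c)$-definable pieces on which this position is constant---these pieces are $M$-definable by the inductive hypothesis---and on each piece replace $f$ and $g$ by $M$-definable functions whose values have the same principal position relative to $M$-points. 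The resulting description of $X$ is $M$-definable and has the same trace on $M^k$, completing the induction.
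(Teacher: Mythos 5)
The paper itself does not prove this statement; it quotes it from \cite{MarkerSteinhorn94}, so your proposal has to stand on its own, and it is the hard direction that does not. Your forward direction is essentially the right computation, but the appeal to ``Skolem functions available in o-minimal $T$'' is inaccurate: a general o-minimal theory need not have definable Skolem functions (the paper only obtains them later, after imposing the convention that all principal types are interdefinable), so an element of the prime model $M(\bar c)$ need not be of the form $f(\bar c)$. This is easily patched: any $d\in M(\bar c)\setminus\dcl(M\bar c)$ has isolated type over $M\bar c$, isolated by an interval with endpoints in $\dcl(M\bar c)\cup\{\pm\infty\}$ and containing no point of $\dcl(M\bar c)\supseteq M$, so $d$ realizes the same cut over $M$ as such an endpoint, and your argument for $f(\bar c)$ applies to that endpoint.

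The genuine gap is in the inductive step of your Key Lemma, at exactly the point where the theorem is substantive. First, the pieces of $M^{k-1}$ on which the ``position'' of $f(\bar a)$ relative to $M$ is of a fixed kind are not presented as traces of $M(\bar c)$-definable sets: conditions such as ``$f(\bar a)\in M$'' or ``$f(\bar a)$ lies infinitesimally above a point of $M$'' refer to $M$ as an external predicate, so the inductive hypothesis cannot be invoked to conclude these pieces are $M$-definable. Second, and more seriously, the step ``replace $f$ and $g$ by $M$-definable functions whose values have the same principal position'' asserts that the $M$-standard-part map $\bar a\mapsto \hat f(\bar a)\in M\cup\{\pm\infty\}$ is (piecewise) given by $M$-definable functions; but this, together with the side information, is equivalent to the $M$-definability of the trace on $M^k$ of the $k$-dimensional set $\{(\bar x_{<k},y)\mid y<f(\bar x_{<k})\}$ --- an instance of the Key Lemma at level $k$, not $k-1$. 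As structured, the induction is therefore circular: the uniform definability of standard parts of definable families is precisely the content of Marker and Steinhorn's theorem, which they establish by a different induction (on the number of generators of the extension, analyzing one new element at a time, via the results the present paper later cites as their Lemmas 2.7 and 2.8), not by cell decomposition in the ambient dimension. To salvage your route you would need an independent proof of that uniformization claim, which amounts to proving the theorem itself.
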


\begin{lemma}\label{outofscaledfbl}
Let $A$ be a set and $p\in S_n(A)$ an $n$-type, with $\bar c\models p$.  If $\tp(c_i/A\bar c_{<i})$ is principal, algebraic, or out of scale on $A$ for $i=1,\ldots,n$, then $p$ is definable.
\end{lemma}
\begin{proof}
Suppose that $p$ is an $n$-type and not definable.  Let $\bar c\models p$ and let $M=\Pr(A)$.  Let $i$ be the first coordinate such that $\tp(\bar c_{\le i}/M)$ is not definable.  Then $\tp(c_i/M\bar c_{<i})$ is not principal or algebraic by Lemma 2.5 of \cite{MaSt94}. By Lemma 2.7 of \cite{MaSt94}, there is an $M\bar c_{<i}$-definable function $f$ such that $\tp(f(c_i)/M)$ is non-principal.  Since $\tp(\bar c_{<i}/M)$ is definable by choice of $i$, Theorem \ref{ms9421} implies that $M\langle\bar c_{<i}\rangle$ realizes no elements in $\tp(f(c_i)/M)$.  Thus $f^{-1}(M)$ is cofinal and coinitial at $c_i$ in $M\langle\bar c_{<i}\rangle$, and so $\tp(c_i/A\bar c_{<i})$ is not out of scale on $A$.
\end{proof}

\subsection*{Decreasing types}
Given an $n$-type, the ordering of the variables can affect the type of each variable over the preceding ones.  Consider the type of $\langle\epsilon,\epsilon'\rangle$ over $M=(\RR,+,\cdot,<)$, where $1\gg\epsilon\gg\epsilon'>0$.  We have that $\tp(\epsilon/M)$ and $\tp(\epsilon'/M\epsilon)$ are principal.  However, if we
consider the elements in reverse order, $\tp(\epsilon'/M)$ is still principal,
but now $\tp(\epsilon/M\epsilon')$ is non-principal.  We wish to fix a class of
orderings of $p$'s coordinates that will provide some predictability.

We begin by defining a useful partial ordering.

\begin{definition}
Let $A$ be a set.  Define $a\precsim_A b$ if $\dcl(aA)$ is coinitial in $\dcl(bA)$ above $0$.
\end{definition}

Note that $\precsim_A$ defines a partial ordering, since ``coinitiality'' is transitive.

\begin{definition}
Given a base set, $A$, and a tuple, $\bar c=\langle c_1,\ldots,c_n\rangle$, define $c_j\precsim_i c_k$, for $i\le j,k\le n$, if $c_j\precsim_{A\bar c_{<i}}c_k$.  Given an $n$-type, $p$, define $x_j\precsim_i x_k$ if, for some (equivalently, every) realization $\bar c$ of $p$, we have $c_j\precsim_i c_k$.
\end{definition}

\begin{lemma}\label{decreasing}
Let $p$ be an $n$-type over a set $A$.  Then there exists a reordering of the variables of $p$ such that, in the new ordering, $x_i\succsim_i x_j$, for all $i<j\le n$.
\end{lemma}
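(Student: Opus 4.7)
The plan is a greedy construction using the preceding lemma, which states that $\sim_B$ is an equivalence relation and $\prec_B$ strictly totally orders its classes for every base set $B$. In particular, any \emph{finite} family of elements admits a $\precsim_B$-maximum class, and once this is granted the whole argument is essentially bookkeeping.

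I would define a permutation $\sigma$ of $\{1,\ldots,n\}$ recursively. Having chosen $\sigma(1),\ldots,\sigma(i-1)$, set $B_i \mycoloneqq A\bar c_{\sigma(<i)}$ and $R_i \mycoloneqq \{1,\ldots,n\}\setminus\{\sigma(1),\ldots,\sigma(i-1)\}$, and pick $\sigma(i)\in R_i$ so that $c_{\sigma(i)}$ lies in a $\precsim_{B_i}$-maximum $\sim_{B_i}$-class among $\{c_k : k\in R_i\}$. This choice is available because $R_i$ is finite and the preceding lemma linearly orders the relevant classes.

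To verify the conclusion, fix $i<j$. Because $\sigma(j)$ was still in the pool $R_i$ when $\sigma(i)$ was selected, the maximality of the class of $c_{\sigma(i)}$ forces $c_{\sigma(j)} \precsim_{B_i} c_{\sigma(i)}$, i.e.\ $c_{\sigma(i)}\succsim_{B_i} c_{\sigma(j)}$. After relabeling $x_{\sigma(k)}$ as $x_k$, we have $\bar c'_{<i} = \bar c_{\sigma(<i)}$, so $\succsim_i$ in the re-indexed type is $\succsim_{B_i}$, and the inequality above reads exactly $x_i\succsim_i x_j$, as required.

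There is no real obstacle: the single input is the preceding lemma (linearity of $\prec_B$ on $\sim_B$-classes), and once that is available the greedy choice is always well-defined. The only place one has to be a little careful is in confirming that the base set of $\prec$ used in the definition ``$c_j\prec_i c_k$'' is indeed $A\bar c_{<i}$ in the \emph{new} indexing, which is immediate from how $\sigma$ is constructed.
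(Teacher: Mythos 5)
Your proposal is correct and is essentially the paper's proof: the paper also reorders greedily in stages, choosing at stage $i$ a $\prec_i$-maximal element among the remaining coordinates, with maximality available because $\prec$ totally orders the $\sim$-classes over any base set. Your write-up merely makes the bookkeeping (the permutation $\sigma$, the base sets $B_i$, and the re-indexing check) explicit.
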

\begin{proof}
We reorder $p$ in stages.  At stage $i$, having determined $\bar x_{<i}$, there is at
least one maximal element in the partial order $\precsim_i$ among the remaining $x_j$.  Set
any such maximal element to be $x_i$.
\end{proof}

\begin{definition}\label{defni}
If the variables of $p$ satisfy the conclusion of Lemma \ref{decreasing}, we say that $p$ is \emph{decreasing}.  For $i$ an index in the variables of $p$, let $Q(i)$ denote the greatest index at most $i$ such that $\tp(c_{Q(i)}/\bar c_{<Q(i)}A)$ is principal, and $0$ if such index does not exist.
\end{definition}

There is a connection between decreasing sequences and the $T$-convex subrings of \cite{vdDLew95}.  If $\bar c$ is a decreasing sequence over $A$, then for $1\le j<k$, the convex hull of $\Pr(A\bar c_{<j})$ is a $T$-convex subring contained in the
convex hull of $\Pr(A\bar c_{<k})$, with equality if and only if
$Q(k)\le j$.  The connections between decreasing sequences and
$T$-convex subrings and valuations will be presented in a future paper.

\begin{lemma}\label{noncutsafterni}
Let $p$ be a decreasing $n$-type over a set $A$, let $\bar c\models p$, and let $k$ be an index such that $\tp(c_k/A\bar c_{<k})$ is principal.  Then for $i\ge k$, $\tp(c_i/\bar c_{<k}A)$ is principal.
\end{lemma}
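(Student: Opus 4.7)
The case $i=k$ is immediate, so I fix $i>k$ and set $B\mycoloneqq A\bar c_{<k}$.  From the decreasing hypothesis applied to the pair of indices $k<i$, I obtain $c_k\succsim_B c_i$, i.e., $c_i\precsim_B c_k$.  My plan is to convert the principality of $\tp(c_k/B)$ into an explicit positive infinitesimal over $\dcl(B)$ lying in $\dcl(Bc_k)$, transfer it through the $\precsim_B$ relation to produce an infinitesimal in $\dcl(Bc_i)$, and then read off principality of $\tp(c_i/B)$ by a standard o-minimal monotonicity argument.  I assume throughout that $c_k\notin\dcl(B)$; the algebraic sub-case is either trivial or handled separately.

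\textbf{Producing the infinitesimal witness.}  Since $\tp(c_k/B)$ is principal near some $\beta_k\in\dcl(B)\cup\{\pm\infty\}$, the element $\epsilon_k\mycoloneqq|c_k-\beta_k|$ (or $\epsilon_k\mycoloneqq 1/|c_k|$ when $\beta_k=\pm\infty$) lies in $\dcl(Bc_k)$ and is a positive infinitesimal over $\dcl(B)$.  I then extract a $B$-definable function $g$ with $g(c_i)>0$ infinitesimal over $\dcl(B)$, in two sub-cases.  If $c_i\prec_B c_k$, the witnessing positive $a'\in\dcl(Bc_i)$ is below every positive element of $\dcl(Bc_k)$, in particular below $\epsilon_k$, so $a'$ is infinitesimal over $\dcl(B)$; write $a'=g(c_i)$.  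If instead $c_i\sim_B c_k$, the failure of $c_k\prec_B c_i$ applied to the specific positive element $\epsilon_k\in\dcl(Bc_k)$ produces $b'\in\dcl(Bc_i)\cap(0,\epsilon_k)$, again infinitesimal over $\dcl(B)$; write $b'=g(c_i)$.

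\textbf{From infinitesimal values to principality.}  By cell decomposition I may choose a $B$-definable open interval $I=(\alpha,\gamma)$ with $\alpha,\gamma\in\dcl(B)\cup\{\pm\infty\}$, $c_i\in I$, on which $g$ is continuous and strictly monotonic; WLOG $g$ is increasing, the decreasing case being symmetric.  The one-sided limit $g(\alpha^+)\in\dcl(B)\cup\{\pm\infty\}$ cannot be a positive element of $\dcl(B)$, for then $g(c_i)\ge g(\alpha^+)$ would contradict $g(c_i)$ being infinitesimal over $\dcl(B)$.  By restricting $I$ to $(\beta,\gamma)$ at a zero $\beta\in\dcl(B)\cap I$ of $g$ if necessary, I may therefore assume $g(\alpha^+)=0$.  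Then for every positive $\eta\in\dcl(B)$, the value $g(\alpha+\eta)\in\dcl(B)$ is positive and hence not infinitesimal over $\dcl(B)$, so $g(c_i)<g(\alpha+\eta)$ and monotonicity yields $c_i<\alpha+\eta$.  This shows $c_i-\alpha$ is positive and infinitesimal over $\dcl(B)$ (or $c_i<r$ for every $r\in\dcl(B)$, when $\alpha=-\infty$), so $\tp(c_i/B)$ is principal near $\alpha$.  The main subtlety is the $\sim_B$ sub-case of the previous paragraph: one must apply the failure of $c_k\prec_B c_i$ to the specific witness $\epsilon_k$, not to an arbitrary positive element of $\dcl(Bc_k)$, so that the resulting $b'$ inherits the infinitesimal scale of $\epsilon_k$.
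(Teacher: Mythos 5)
Your proof follows essentially the same route as the paper's: the decreasing hypothesis says exactly that $c_k\not\prec_{A\bar c_{<k}}c_i$, so the positive part of $\dcl(A\bar c_{<k}c_i)$ reaches below the positive infinitesimal (over $\dcl(A\bar c_{<k})$) supplied by principality of $\tp(c_k/A\bar c_{<k})$, and an element of $\dcl(A\bar c_{<k}c_i)$ realizing the principal type above $0$ forces $\tp(c_i/A\bar c_{<k})$ to be principal; your two sub-cases ($\prec$ and $\sim$) both reduce to this single negation, so the split is unnecessary. The one caveat is generality: the lemma sits under the paper's standing convention that all principal types are interdefinable, where one only has a $\emptyset$-definable group chunk around $0$ and no global $+$, $-$, or reciprocal, so your witnesses $|c_k-\beta_k|$ and $1/|c_k|$, and the final step ``$c_i<\alpha+\eta$ for every positive $\eta\in\dcl(B)$, hence $c_i-\alpha$ is infinitesimal'' (with $B=A\bar c_{<k}$), are only literally available when $T$ expands an ordered group (indeed a field for the reciprocal). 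The paper avoids this by taking its infinitesimal $d\in\dcl(A\bar c_{\le k})$ to be the image of $c_k$ under the $A\bar c_{<k}$-definable map carrying the principal type near $\beta_k$ to the principal type above $0$ (available by the interdefinability convention), and by concluding from $0<d'<d$ via the cut/noncut dichotomy of Marker, which is exactly what your monotonicity argument reproves by hand. Both defects are easily repaired in your framework: replace $|c_k-\beta_k|$ by that image of $c_k$, and in the last step compare $c_i$ with the $B$-definable points of $I$ directly (equivalently with $g^{-1}$ of positive elements of $\dcl(B)$) rather than with $\alpha+\eta$; with those substitutions your argument matches the paper's in full generality.
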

\begin{proof}
Since $c_k\succsim_k c_i$ (by definition of ``decreasing''), we know
that $\dcl(c_iA\bar c_{<k})$ is coinitial above $0$ in $\dcl(A\bar c_{\le k})$.
Since $c_k$ is principal over $A\bar c_{<k}$, there is some $d\in\dcl(A\bar
c_{\le k})$, principal above $0$ over $A\bar c_{<k}$.  By coiniality, there is
some $d'\in\dcl(c_iA\bar c_{<k})$, with $0<d'<d$, but then $d'$ witnesses that
$c_i$ is principal over $A\bar c_{<k}$.
\end{proof}

Note that then $\tp(c_i/A\bar c_{<Q(i)})$ is principal.

\begin{lemma}\label{cknotprincipal}
For $i\le n$ and $k=Q(i)>0$, the type $\tp(c_k/A\bar c_{<k}c_i)$ is not principal.
\end{lemma}
\begin{proof}
We first observe that $\tp(c_i/A\bar c_{\le k})$ is non-principal. Otherwise, by results of \cite{Marker86}, for some $j\in(k,i]$, we would have $\tp(c_j/A\bar c_{<j})$ principal, contradicting the definition of $k=Q(i)$. It follows that $\tp(c_k/A\bar c_{<k}c_i)$ is non-principal as well.
\end{proof}

\section{Good bounds and $i$-closures}\label{goodbound}

Given an $n$-tuple, $\bar c$, and set $A$, there are certain points that must be in any closed $A$-definable set containing $\bar c$, namely the $i$-closures defined in this section. The $i$-closure of $\bar c$ for each $i\le n$ is the limit (in the sense of \cite{HrLo11}) of $\tp(\bar c_{\ge Q(i)}/Ac_{<Q(i)})$. A principle of our proof of Theorem A will be that if a function can be continuously extended to the $i$-closure points for all $i$, then it can be continuously extended to an $A$-definable closed set containing $\bar c$. We can assure continuity on $i$-closures by bounding the various values a function takes by another function that goes to $0$ as it approaches an $i$-closure point. These functions are the ``good bounds.''

When we prove Theorem \ref{maintheorem}, we will prove it just for types of a certain form.  We will then show that all other kinds of types can be transformed into this form.  In this section, therefore, we restrict to considering only this kind of type.

\begin{condition}\label{uniquecond}
The type $p$ is a decreasing $n$-type over a set $A$, the tuple $\bar c$ is a realization of $p$.  We have $i\le n$ and $k=Q(i)>0$.  For $j=k,\ldots,n$, the type $\tp(c_j/\bar c_{<k}A)$ is principal above finite $\beta_j(\bar c_{<k})\in A\langle\bar c_{<k}\rangle$.  Let
$\bar\beta=\langle\beta_k,\ldots,\beta_n\rangle$.
\end{condition}

Note that the $A$-definable functions $\beta_j$ depend on the value of $i$.  If Condition \ref{uniquecond} is true for some $\bar c$, it is true for any $\bar c'\models p$, and thus is a condition just on $p$, $A$, and $i$.  Note that, for any $p$ a decreasing type over $A$ and $\bar c\models p$ with $k=Q(i)$ for some coordinate $i$ and $j\ge k$, we know $\tp(c_j/\bar c_{<k}A)$ is principal by Lemma \ref{noncutsafterni}.

\begin{definition}
Suppose Condition \ref{uniquecond} holds.  Then, for any tuple $\bar a$ with
length at least $k-1$ such that $\bar\beta$ is defined on $\bar a_{<k}$, let
\begin{equation*}
\icl(i,\bar a)=\langle \bar a_{<k},\bar\beta(\bar a_{<k})\rangle.
\end{equation*}
We also call $\icl(i,\bar a)$ the \emph{$i$-closure} of $\bar a$. Note that $\icl(i,\bar a)\in\dcl(A\bar a_{<k})$.
\end{definition}

If $C$ is a definable set, then $\icl(i,C)=\{\icl(i,\bar x)\scht\bar x\in C\}$. If $p$ is a decreasing type, but $i$ is such that Condition \ref{uniquecond} does not hold, then we set $\icl(i,C)=\emptyset$.

\begin{lemma}
Suppose Condition \ref{uniquecond} holds.  If $\tp(c_i/A\bar c_{<i})$ is non-principal, then $\icl(i,\bar x)=\icl(i-1,\bar x)$.
\end{lemma}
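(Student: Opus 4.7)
The plan is to observe that $\icl(i,\bar a)$ depends on $i$ only through $k = Q(i)$, and then use the non-principality hypothesis to conclude $Q(i) = Q(i-1)$, so the two $i$-closures collapse to the same point.

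\textbf{Step 1.} I would first verify that $Q(i) = Q(i-1)$. By Definition \ref{defni}, $Q(i)$ is the greatest index $j \le i$ with $\tp(c_j/A\bar c_{<j})$ principal (or $0$ if none exists). The hypothesis that $\tp(c_i/A\bar c_{<i})$ is non-principal rules out $j=i$, so the maximizing index already satisfies $j \le i-1$; hence the candidate sets defining $Q(i)$ and $Q(i-1)$ are literally the same, and $Q(i) = Q(i-1)$. Set $k$ to this common value, which is positive by Condition \ref{uniquecond}.

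\textbf{Step 2.} I would then observe that each ingredient in the construction of $\icl(i,\bar a)$ depends on $i$ only through $k$. Indeed, the tuple $\bar\beta = \langle \beta_k,\ldots,\beta_n\rangle$ from Condition \ref{uniquecond}, the bounding functions $h_j$ for $j \ge k$, and the $A$-definable set $C^0$ produced in Lemma \ref{uniqueclosure} are all built from $p$, $A$, and $k$ alone. Moreover, Lemma \ref{uniqueclosure} tells us that the resulting point $\langle \bar a_{<k}, \bar\beta(\bar a_{<k})\rangle$ is independent of the particular choice of $C^0$. Since Condition \ref{uniquecond} for index $i-1$ is the same statement (principality of $\tp(c_j/\bar c_{<k}A)$ above $\beta_j(\bar c_{<k})$ for every $j \ge k$) with the same $k$, it holds automatically, and the same $\bar\beta$ and $C^0$ serve both indices. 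Therefore, for any admissible $\bar a$,
\[
\icl(i,\bar a) = \langle \bar a_{<k}, \bar\beta(\bar a_{<k})\rangle = \icl(i-1,\bar a).
\]

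The proof is essentially an unwinding of definitions, and I do not anticipate any real obstacle; the one spot requiring mild care is the bookkeeping confirming that Condition \ref{uniquecond} transfers from $i$ to $i-1$ with the same $k$ and $\bar\beta$, which is automatic precisely because $Q(i) = Q(i-1)$.
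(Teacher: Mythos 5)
Your proposal is correct and follows essentially the same route as the paper: non-principality of $\tp(c_i/A\bar c_{<i})$ forces $Q(i)\le i-1$, hence $Q(i)=Q(i-1)$, and since $\icl$ depends on the index only through $Q$ (via $\bar\beta$ and $\bar a_{<Q(i)}$), the two $i$-closures coincide. The extra bookkeeping you do to confirm Condition \ref{uniquecond} transfers with the same $k$ is fine but the paper treats it as immediate.
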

\begin{proof}
Since $\tp(c_i/A\bar c_{<i})$ is non-principal, $Q(i)<i$. Now the conditions on $Q(i)$ and $Q(i-1)$ are the same, so $Q(i)=Q(i-1)$, and thus
\begin{equation*}
\icl(i,\bar x)=\langle \bar x_{<Q(i)},\beta_{Q(i)}(\bar
x_{<Q(i)}),\ldots,\beta_n(\bar x_{<Q(i)})\rangle =\icl(i-1,\bar x).
\end{equation*}
\end{proof}

There is a $A$-definable set on which the $i$-closures are distinguished.

\begin{lemma}\label{uniqueclosure}
If Condition \ref{uniquecond} holds, then there is an $A$-definable set $C^0$ containing $\bar c$ such that, for every $\bar a\in\pi_{<k}(C^0)$, the set $\cl(C^0)$ contains a unique point, $\bar
d$, with $\bar d_{\le k}=\langle \bar a,\beta_k(\bar a)\rangle$. Moreover, for
each $\bar a$ (and in particular for $\bar c_{<k}$), this point is independent
of choice of $C^0$ -- in fact, it is $\icl(i,\bar a)$.
\end{lemma}
\begin{proof}
By Lemma \ref{cknotprincipal} and Lemma \ref{boundabove}, for each $j>k$ there is some $A$-definable
$k$-ary function, $h_j$, such that
\begin{equation}\label{eqhprops}
\begin{split}
c_j<h_j(\bar c_{\le k}),\text{ and}\\
\lim_{y\to\beta_k(\bar c_{<k})}h_j(\bar c_{<k},y)=\beta_j(\bar c_{<k}).
\end{split}
\end{equation}

Let $C$ be a $A$-definable set containing $\bar c$ such that: $\bar\beta$ is continuous on $C$; $h_j>\beta_j$ for $j>k$ (possible since
$h_j(\bar c_{\le k})>\beta_j(\bar c_{<k})$); and \eqref{eqhprops} holds on all
of $C$ with $\bar c$ replaced by $\bar x$ (possible since it holds for $\bar c$
-- note that the limit statement is first-order).  Let
\begin{equation*}
B=\{\bar x\in C\scht x_j\in(\beta_j(\bar x_{<k}),h_j(\bar x_{\le k})),\text{ for
}j>k\}.
\end{equation*}
Note that $\bar c\in B$.  By Lemma \ref{fiberclosure}, we can decompose $B$ into definable
sets, $C^0,\ldots,C^r$, on each of which, for any $\bar a\in\pi_{<k}(C^s)$,
we have $\cl(C^s_{\bar a})={\cl(C^s)}_{\bar a}$ -- the closure of a fiber is the
fiber of the closure.  Without loss of generality, let $C^0$ be the cell
containing $\bar c$.

Let $\bar a\in\pi_{<k}(C^0)$.  Let $D=\{\bar a\}\times C^0_{\bar a}$.  Let
$\bar d\in\cl(C^0)$, with $\bar d_{\le k}=\langle \bar a,\beta_k(\bar
a)\rangle$.  Note that this implies $\bar d\in\cl(D)$.  We want to show that
$\bar d=\icl(i,\bar a)$.  For $j>k$, we have $d_j\ge\beta_j(\bar a)$.  Let $\bar\gamma(t)$ be an $A\bar a$-definable curve in
$D$, with $\lim_{t\to 0}\bar\gamma(t)=\bar d$, whose existence is guaranteed by
Corollary 1.5 of Chapter 6 in \cite{vdD98book}.  For $j>k$,
\begin{equation*}
d_j\le\lim_{t\to 0^+}h_j({\bar\gamma(t)}_{\le k})=\lim_{y\to\beta_j(a)^+}h_j(\bar
a,y)=\beta_j(\bar a).\\
\end{equation*}
Thus, $\bar d=\langle \bar a,\bar\beta(\bar a)\rangle=\icl(i,\bar a)$.
\end{proof}

When Condition \ref{uniquecond} holds, $C^0$ always denotes the set coming
from Lemma \ref{uniqueclosure}.

\begin{lemma}\label{onlyicl}
Let $p$ be a decreasing $n$-type over $\emptyset$, and $C$ any $\emptyset$-definable set containing $p$. There is a $\emptyset$-definable set $D$ containing $p$ such that $\cl(D)\setminus C\subseteq\bigcup_{i\le n}\icl(i,D)$.
\end{lemma}
\begin{proof}
Let $C^{0,i}$ be the set $C^0$ for $i$ if Condition \ref{uniquecond} holds for $p$ and $i$, and $C$ otherwise. Let $D=\bigcap_{i\le n} C^{0,i}$. The set $D$ is non-empty, since it contains all realizations of $p$. We may restrict $D$ and suppose that it is a cell. Let $\bar c\models p$. For $i=1,\ldots,n$, if $\tp(c_i/\bar c_{<i})$ is algebraic, we may suppose that the cell definition of $D$ at $i$ is just the function $f_i$ defining $c_i$ from $\bar c_{<i}$. We now refine the definition of $D$, coordinate by coordinate. Let $(f_i,g_i)$ be the $i$-th coordinate cell definition of $D$. If $\tp(c_i/\bar c_{<i})$ is principal, say above $\alpha(\bar c_{<i})$, then we may replace $f_i$ by $\alpha$ and $g_i$ by $(\alpha+g_i)/2$. If $\tp(c_i/\bar c_{<i})$ is non-algebraic and non-principal, then we can find $\emptyset$-definable functions $f'_i<g'_i$ lying in $(f_i,g_i)$, and replace $f_i$ and $g_i$ with those.

Now if $\bar a\in\cl(D)\setminus C$, let $i$ be the least coordinate such that $\bar a_{\le i}\notin\pi_{\le i}(C)$. Then $\tp(c_i/\bar c_{<i})$ is clearly principal, and $a_i=f_i(\bar a_{<i})$, supposing without loss of generality that $c_i$ is principal above $f_i(\bar c_{<i})$. By Lemma \ref{uniqueclosure}, since $Q(i)=i$, there is exactly one point in $\cl(D)$ with first $i$ coordinates $\langle \bar a_{<i},f_i(\bar a_{<i})\rangle$, namely $\icl(i,\bar a_{<i})$. Taking any $\bar a'\in D$ with $\bar a'_{<i}=\bar a_{<i}$, we see that $\bar a=\icl(i,\bar a')\in\icl(i,\bar D)$.
\end{proof}

\begin{convention}
For the rest of this paper, if $f:\mathcal C^i\to\mathcal C$ is a function, and $n>i$, we abuse
notation and write $f(\bar x)$ for $\bar x\in\mathcal C^n$ to mean $f(\pi_{\le i}(\bar x))$, and consider $f$ a function on $\mathcal C^n$ when convenient.
\end{convention}

\begin{definition}
Suppose Condition \ref{uniquecond} holds.  Let $f$ be an $i$-ary $A$-definable bounded function such that, for some $A$-definable $C\subseteq C^0$ with $\bar c\in C$, the function $f$ is continuous and non-negative on $C\cup\icl(i,C)$, and moreover $f(\icl(i,C))=0$.  Then we call $f$ a \emph{good bound at $i$}.  \end{definition}

\begin{definition}
Suppose Condition \ref{uniquecond} holds. Let
\begin{equation*}
m_i(\bar x_{\le i})=\min(|x_{Q(i)}-\beta_{Q(i)}(\bar x_{<Q(i)})|,1).
\end{equation*}
\end{definition}

\begin{lemma}
If Condition \ref{uniquecond} holds, then $m_i$ is a good bound at $i$, with domain $C^0$.
\end{lemma}
\begin{proof}
The function $\beta_{Q(i)}$ is continuous on $\pi_{<Q(i)}(C^0)$ by definition of
$C^0$, hence $m_i$ is continuous on $C_0\cup\icl(i,C^0)$.  If $\bar x\in C^0$,
then $\icl(i,\bar x)_{Q(i)}=\beta_{Q(i)}(\bar x_{<Q(i)})$, so $m_i$ is $0$ on $\icl(i,C^0)$.
\end{proof}

\begin{lemma}\label{goodbounddown}
Let Condition \ref{uniquecond} hold. Suppose that Condition \ref{uniquecond} also holds for $i-1$. If $f$ is a good bound at $i$ then there
exists $f'$ with $f'\ge f$ on some definable set containing $\bar c$, and
$f'$ a good bound at $i-1$.
\end{lemma}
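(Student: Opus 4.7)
The plan is to define $f'$ by taking, at each point $\bar x_{\le i-1}$, the supremum of $f(\bar x_{\le i-1}, y)$ over a carefully chosen $A$-definable interval of $y$-values. The interval should contain $c_i$ when $\bar x_{\le i-1} = \bar c_{\le i-1}$ (so that $f'(\bar c_{\le i-1}) \ge f(\bar c)$, and by continuity, $f' \ge f$ on a definable neighborhood of $\bar c$), and it should collapse to a single point of the $i$-closure as $\bar x_{\le i-1}$ approaches the $(i-1)$-closure (so that the vanishing of $f$ at the $i$-closure forces $f'$ to vanish at the $(i-1)$-closure).

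The construction of this interval $I(\bar x_{<i})$ splits according to whether $\tp(c_i/A\bar c_{<i})$ is principal. If it is not, then $Q(i) = Q(i-1) = k$ with $k \le i-1$, and I would imitate the proof of Lemma \ref{uniqueclosure} (which uses Lemma \ref{boundabove}) to obtain an $A$-definable $h_i(\bar x_{\le k})$ with $c_i < h_i(\bar c_{\le k})$ and $\lim_{x \to \beta_k(\bar c_{<k})^+} h_i(\bar c_{<k}, x) = \beta_i(\bar c_{<k})$, and set $I(\bar x_{<i}) := [\beta_i(\bar x_{<k}), h_i(\bar x_{\le k})]$. If $\tp(c_i/A\bar c_{<i})$ is principal (so $Q(i) = i$), I would use the group chunk from Condition \ref{uniquecondc} to set $U(\bar x_{<i}) := h^+(m_{i-1}(\bar x_{<i}), \beta_i(\bar x_{<i}))$ and $I(\bar x_{<i}) := [\beta_i(\bar x_{<i}), U(\bar x_{<i})]$. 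Here $m_{i-1}(\bar c_{<i})$ is positive and lies in $\dcl(A\bar c_{<i})$; since $c_i$ is principal above $\beta_i(\bar c_{<i})$, every positive element of $\dcl(A\bar c_{<i})$ exceeds $\hat h_{\beta_i(\bar c_{<i})}(c_i)$, giving $U(\bar c_{<i}) > c_i$.

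With $f'(\bar x_{\le i-1}) := \sup\{f(\bar x_{\le i-1}, y) \mid y \in I(\bar x_{<i})\}$, definability and non-negativity are immediate. After shrinking the domain so that $f$ is bounded, o-minimal cell decomposition gives continuity of $f'$ on a definable neighborhood of $\bar c_{\le i-1}$. Continuous extension to value $0$ at $\icl(i-1, \bar c)$ follows because as $\bar x_{\le i-1}$ approaches the first $i-1$ coordinates of $\icl(i-1, \bar c)$, the interval $I(\bar x_{<i})$ collapses to a single point: in the non-principal case to $\beta_i(\bar c_{<k})$ (since $h_i \to \beta_i$ by choice), and in the principal case to $\beta_i$ of the limit point (since $m_{i-1} \to 0$ drives $U \to \beta_i$ via $h^+$). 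Combined with the vanishing of $f$ at the $i$-closure, extended by continuity to closure points as necessary, this yields $\sup f \to 0$.

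The main obstacle is the continuity argument in the third paragraph, and in particular verifying uniform continuity of $f'$ on a definable neighborhood rather than merely at the type $\bar c_{\le i-1}$, together with ensuring that the boundary point attained in the limit of $I$ lies in the set where $f$ extends continuously (relevant especially in the principal case, where the limit is $\icl(i, \icl(i-1, \bar c))$ rather than $\icl(i, \bar c)$ itself). Both points should be resolvable by iterated shrinking of the definable set and applying Lemma \ref{uniqueclosure} to control the closure behavior.
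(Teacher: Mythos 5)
Your non-principal case is essentially the paper's: there the paper also sets $f'(\bar x_{<i})=\sup\{f(\bar x_{\le i})\mid x_i\in J(\bar x_{<i})\}$ (over a closed interval about $c_i$ rather than $[\beta_i,h_i]$, which sidesteps worries about $f$ being undefined near $\beta_i$), and since $\icl(i,\bar x)=\icl(i-1,\bar x)$ the collapse-plus-continuity argument, carried out via definable choice along curves, goes through. The principal case, however, is where your construction breaks, and the obstacle you flag at the end is not a technicality but fatal. The good bound property of $f$ at $i$ only gives $f\to 0$ at the points $\icl(i,\bar x)$ for $\bar x\in C$; it gives no control whatsoever of $f$ near the corner point $\icl(i,\icl(i-1,\bar x))$, which is exactly where your collapsing fibers accumulate. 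Concretely, take $i=n=2$, $c_1$ principal above $0$ over $A$, $c_2$ principal above $0$ over $Ac_1$, and $f(x_1,x_2)=x_2/x_1$ on $C=\{0<x_2<x_1<1\}$: this is a good bound at $2$, but your interval $I(x_1)=[\,0,\,h^+(m_1(x_1),0)\,]$ has width comparable to $x_1$, so $f'(x_1)=\sup_{0<y<x_1\,\wedge\, y\le U(x_1)} y/x_1$ is bounded below (equal to $1$ for the natural choice of $h^+$) and does not vanish at $\icl(1,\bar x)$. The same failure occurs for any $A$-definable interval chosen in advance of $f$: given $U(\bar x_{<i})>\beta_i(\bar x_{<i})$ one can always build a good bound at $i$ (e.g.\ $\min\bigl(1,\hat h_{\beta_i}(x_i)/\hat h_{\beta_i}(U(\bar x_{<i}))\bigr)$) whose fiberwise supremum over $[\beta_i,U]$ is identically $1$. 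So no choice of interval depending only on $m_{i-1}$, $\beta_i$, $h^\pm$ can work.

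The missing idea is that in the principal case you should not take a supremum over fibers at all; instead you may shrink the domain in the $i$-th coordinate in a way that depends on $f$, and this costs nothing because $c_i$ is principal above $\beta_i(\bar c_{<i})$, so $\bar c$ stays inside $\{\beta_i(\bar x_{<i})<x_i<h(\bar x_{<i})\}$ for \emph{any} $A$-definable $h>\beta_i$. This is what the paper does: since $f(\icl(i,\bar x))=0<m_{i-1}(\icl(i,\bar x))$ (here $Q(i)=i$, so the $Q(i-1)$-th coordinate of $\icl(i,\bar x)$ is still strictly above its boundary value), one chooses $h(\bar x_{<i})$ so that $f<m_{i-1}$ on $(\beta_i(\bar x_{<i}),h(\bar x_{<i}))$, restricts $C$ accordingly, and takes $f'=m_{i-1}$, which is already a good bound at $i-1$. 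In my example this amounts to restricting to $x_2<x_1^2$ (which still contains $(c_1,c_2)$) and taking $f'(x_1)=x_1$. Your Lemma~\ref{uniqueclosure}-style shrinking cannot substitute for this, because it only controls which boundary points appear in closures, not the values of $f$ near the corner.
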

\begin{proof}
By the definition of a good bound, there is some $A$-definable $C\subseteq C^0$ such that $f$ is continuous and non-negative on $C\cup\icl(i,C)$ with $f(\icl(i,C))=0$. If $\tp(c_i/\bar c_{<i}A)$ is algebraic, then we can define $f'$ to equal $f$ on a definable set containing $\bar c$, so we may suppose not.
 
\subsubsection*{Case 1: $\tp(c_i/\bar c_{<i}A)$ is principal}

The element $c_{i-1}$ is principal over $M\langle \bar c_{<Q(i-1)}\rangle$ near
$\beta_{i-1}(\bar c_{<Q(i-1)})$, where $\beta_{i-1}$ is a finite $A$-definable function (note that $\beta_{i-1}$ is not part of the original sequence of functions, $\bar\beta$).  Without loss of generality $\tp(c_{i-1}/A\bar c_{<Q(i-1)})$ is principal above $\beta_{i-1}(\bar c_{<Q(i-1)})$.  We may restrict $C$ so that $\icl(i-1,\bar x)\notin C$ for $\bar x\in C$, since we can take $C$ to have lower boundary at least $\beta_{i-1}(\bar x_{<Q(i-1)})$ at the $(i-1)$-st coordinate.  For $\bar x\in C$, we have $\icl(i-1,\bar x)\ne\icl(i,\bar x)$, since $\icl(i-1,\bar x)_{i-1}=\beta_{i-1}(\bar x_{<Q(i-1)})<x_{i-1}=\icl(i,\bar x)_{i-1}$.  Since $f$ is a good bound at $i$, we know that $f(\icl(i,\bar x))=0$
for $\bar x\in C$, and therefore $f(\icl(i,\bar x))<m_{i-1}(\icl(i,\bar x))$.
Note that $Q(i)=i$, since $\tp(c_i/\bar c_{<i}A)$ is principal.  Since $f$ and $m_{i-1}$ are continuous, there is some $A$-definable function $h(\bar x_{<i})$ such that, if $x_i\in(\beta_i(\bar
x_{<i}),h(\bar x_{<i}))$, then $f(\bar x)<m_{i-1}(\bar x)$.  Restrict $C$ to have upper boundary at most $h$ on the $i$-th coordinate.  Then, on our new $C$, we have $m_{i-1}>f$, and $m_{i-1}$ is a good bound at $i-1$.

\subsubsection*{Case 2: $\tp(c_i/\bar c_{<i}A)$ is non-principal}

Since $f(\icl(i,\bar c))=0$, there is an $A\bar c_{<Q(i)}$-definable continuous increasing function, $\delta(t)$, such that given any sufficiently small $\epsilon>0$, we have $f(\bar x)<\epsilon$ for $x\in C(\epsilon)$, with
\begin{equation*}
C(\epsilon)=\{\bar x\in C\scht \bar x_{<Q(i)}=\bar c_{<Q(i)}\land|\bar x-\icl(i,\bar c)|<\delta(\epsilon)\}.
\end{equation*}

(Here and going forward, $|\cdot|$ is the $\sup$ norm.) By the proof of Lemma \ref{cknotprincipal}, $\tp(c_j/A\bar c_{\le Q(i)})$ is non-principal for $j\in[Q(i),i]$. Thus Lemma \ref{boundabove} implies that each $|c_j-\beta_j(\bar c_{<Q(i)})|$ is bounded by some $A\bar c_{<Q(i)}$-definable function of $c_{Q(i)}$, say $h_j(c_{Q(i)})$, with $\lim_{t\to\beta_{Q(i)}}h_j(t)=0$. Let $h(t)=\max_{j\in[i,Q(i)]}h_j(t)$.

Define $g(\bar x)$ to be $\sup\{f(\bar x_{<i},t)\scht|\langle\bar x_{<i},t\rangle-\icl(i,\bar x)_{\le i}|<h(x_{Q(i)})\}$, with domain $\{\bar x\in C\scht \bar x_{<Q(i)}=\bar c_{<Q(i)},|\bar x_{\le i}-\icl(i,\bar x)_{\le i}|<h(x_{Q(i)})\}$. If $\bar x\in\dom(g)$, then $\bar x_{\le i}\in \pi_{\le i}(C(\delta^{-1}(h(x_{Q(i)}))))$. We restrict $C$ to a set containing $\bar c$ such that $g$ is continuous on its domain. Since the value of $f$ on $C(\epsilon)$ goes to $0$ as $\epsilon$ goes to $0$, the value of $g(\bar x)$ as $\bar x$ approaches $\icl(i,\bar c)$ must likewise go to $0$, since for $\bar x$ with $\delta^{-1}\circ h$ defined on $x_{Q(i)}$, $g$ is bounded above by $\delta^{-1}(h(x_{Q(i)}))$, and this value goes to $0$ as $x_{Q(i)}$ goes to $\beta_{Q(i)}(\bar x_{<Q(i)})$. As well, $g\ge f$ on $\dom(g)$.

The above argument also holds with the parameters $\bar c_{<Q(i)}$ replaced by elements in an $A$-definable neighborhood of $\bar c_{<Q(i)}$, say $D$, with $h$ now a $Q(i)$-ary function. This shows that $g$ is a good bound at $i-1$ on the set
\begin{equation*}
\{\bar x\in C\scht \bar x_{<Q(i)}\in D\land|\bar x-\icl(i,\bar x)|<h(\bar x_{\le Q(i)})\}.
\end{equation*}
Redefining $C$ to have $i$-th coordinate upper boundary function $h$, we see that $g\ge f$ on $C$.
\end{proof}

\section{Main result}\label{s-theorem}

We are now ready to prove our main theorem.  We restate it, since all terms have finally been defined.

\begin{theorem}\label{maintheoremrestate}
Let $A\subseteq M$ be a set.  Let $p$ be a finite decreasing $n$-type over $A$.  Let $\bar c=\langle c_1,\ldots,c_n\rangle\models p$.  The following statements are equivalent:
\renewcommand{\theenumi}{S\arabic{enumi}}
\begin{enumerate}
\item\label{theoremitemfunction} For every $A$-definable bounded $n$-ary function, $F$, defined on $\bar c$, there is an $A$-definable set $C$ with $\bar c\in C$, such that $F\restrict C$ is continuous and extends continuously to $\cl(C)$.
\item\label{theoremitemtype} There is $i_0\le n$ such that $\tp(c_i/A\bar c_{<i})$ is algebraic, principal, or out of scale on $A\bar c_{<Q(i)}$ for $i=i_0,\ldots,n$, and for $i<i_0$, $\tp(c_i/A)$ is non-principal.
\end{enumerate}
\renewcommand{\theenumi}{\arabic{enumi}}
\end{theorem}
\begin{proof}
We suppose in the proof that $p$ is independent, and satisfies Condition \ref{uniquecond} for all $i\le n$ such that $Q(i)>0$. Afterward we will show how to reduce other cases to this one.

We first prove that \eqref{theoremitemtype} implies \eqref{theoremitemfunction}.\footnote{Thanks to P. Speissegger for the use in this proof of van den Dries' result on fiberwise-continuous functions.} We will go by induction on $n$, although we will also have an additional ``inner'' induction.  By adding constants for the elements of $A$ to the language $L$, we may assume that $A=\emptyset$.  Let $P=\Pr(\emptyset)$.

Let $F$ be continuous on $D$, an open $\emptyset$-definable cell containing $p$. We suppose that $D$ satisfies the conclusion of Lemma \ref{onlyicl} for some definable set $D'\supseteq D$, so $F$ is already continuous on $\cl(D)\setminus\bigcup_{i\le n}\icl(i,D)$. Let $f_i$, $g_i$ be the $\emptyset$-definable lower and upper bounding functions in the definition of $D$ as a cell. We now construct new $\emptyset$-definable bounding functions to replace these, starting at $i=n$ and going down to $i=1$, using induction hypotheses on the boundary functions already constructed, as well as our global induction hypothesis on $n$. For any $\bar x$ with $\bar x_{\le
i}\in\pi_{\le i}(D)$, let $E^i_{\bar x}=\{\bar x_{\le i}\}\times D_{\bar x_{\le
i}}$.  

We have two induction statements at stage $i$:

\renewcommand{\theenumi}{I\arabic{enumi}}
\renewcommand{\labelenumi}{(\theenumi)}
\begin{enumerate}

\item\label{i1} For all $\bar x\in\pi_{\le i}(D)$, $F\restrict E^i_{\bar x}$ is continuous and extends continuously to $\cl(E_{\bar x}^i)$.

\item\label{i2} If $Q(i)>0$, then there is a $\emptyset$-definable $i$-ary function $G$, a good bound at $i$, such that for any $\bar a,\bar a'\in D$ with $\bar a_{\le i}=\bar a'_{\le i}$, we have $|F(\bar a)-F(\bar a')|\le G(\bar a_{\le i})$.

\end{enumerate}
\renewcommand{\theenumi}{\arabic{enumi}} \renewcommand{\labelenumi}{(\theenumi)}

Both \eqref{i1} and \eqref{i2} are trivially true when $i=n$, and \eqref{i1} for $i=0$ gives \eqref{theoremitemfunction}, our desired result. We prove \eqref{i1} and \eqref{i2} for $i-1$, given them for $i$.

\begin{claim}\label{reducetofg}
We may shrink $D$ so that, for $\bar x\in\pi_{<i}(D)$, the function $F\restrict E^{i-1}_{\bar x}$ is continuous and extends continuously to $\cl(E^{i-1}_{\bar
x})\cap\{\bar y\scht\bar y_{\le i}\in\pi_{\le i}(D)\}$.
\end{claim}
\begin{proof}
Let $\oF$ be $F$ with its domain extended onto $\cl(E^i_{\bar x})$ for each
$\bar x\in\pi_{\le i}(D)$.  By \eqref{i1} and Corollary 2.4 of Chapter 6 of
\cite{vdD98book}, for any $\bar x\in\pi_{\le
i-1}(D)$, we can partition $(f_i(\bar x),g_i(\bar x))$ into intervals $I_1(\bar x),\ldots,I_{r(\bar x)}(\bar x)$ (and their endpoints) so that $\oF$ is continuous on
\begin{equation*}
\{\bar y\in \cl(D)\scht \bar y_{<i}=\bar x\land y_i\in I_j(\bar x)\},
\end{equation*}
for $1\le j\le r(\bar x)$.  Let $r=r(\bar c)$. Let $I_j(\bar c_{<i})$ be given by $(h_j(\bar c_{<i}),h_{j+1}(\bar c_{<i}))$, for some $\emptyset$-definable functions $h_j$, $j=1,\ldots,r+1$, with $h_1=f_i$ and $h_{r+1}=g_i$. Let $U\subseteq \mathcal C^{i-1}$ be a $\emptyset$-definable open set containing $\bar c_{<i}$ such that $r(\bar x)$ is constant on $U$, the functions $h_1,\ldots,h_{r+1}$ are continuous on $U$, and $I_j(\bar x)=(h_j(\bar x),h_{j+1}(\bar x))$ for all $\bar x\in U$ and $j=1,\ldots,r$.  Let $k$ be such that $c_i\in I_k(\bar c_{<i})$.  Replace $D$ by $D\cap \{\bar x\scht \bar x_{<i}\in U, x_i\in(h_k(\bar x_{<i}),h_{k+1}(\bar x_{<i}))\}$.  Then for each $\bar x\in\pi_{<i}(D)$, the function $\oF$ is continuous on
\begin{equation*}
\{\bar y\in\cl(D)\scht\bar y_{<i}=\bar x\land\bar y_{\le i}\in\pi_{\le i}(D)\}\supseteq \cl(E^{i-1}_{\bar x})\cap\{\bar y\scht
\bar y_{\le i}\in\pi_{\le i}(D)\},
\end{equation*}
as desired.
\end{proof}

Now all that remains to show \eqref{i1} for $i-1$ is to consider points in $\cl(E^{i-1}_{\bar x})\setminus\{\bar y\scht \bar y_{\le i}\in\pi_{\le
  i}(D)\}$ -- points with $i$-th coordinate equal to $f_i(\bar x_{<i})$ or
$g_i(\bar x_{<i})$.

Define $\mu:\pi_{\le i}(D)\to \mathcal C$ by $\mu(\bar x)=\sup\{F(\bar y)\scht \bar y_{\le i}=\bar x\land\bar y\in D\}$. We will use $\mu$ in applying the triangle inequality to bound differences in values of $F$. Shrinking $D$, we may suppose that $\mu$ is continuous on $D$. By \eqref{i2} for $i$, there is $G$, a good bound at $i$, such that for $\bar x\in D$, we have $|\mu(\bar x_{\le i})-F(\bar x)|\le G(\bar x_{\le i})$. We must now consider two cases. In each case, we will prove both \eqref{i1} and \eqref{i2} for $i-1$.

\subsubsection*{Case 1: $\tp(c_i/\bar c_{<i})$ is principal}

We have $\tp(c_i/\bar c_{<i})$ principal above $\beta_i(\bar c_{<i})$ over $\bar c_{<i}$ for $\beta_i$ some $\emptyset$-definable function. We may suppose that $f_i\le\beta_i$ on $\pi_{<i}(D)$, since this is true at $\bar c_{<i}$, and so we may actually suppose that $f_i=\beta_i$.  If we replace $g_i$ by $(g_i+f_i)/2$, we guarantee that, for $\bar x\in\pi_{<i}(D)$, $F$ is continuous on the set
\begin{equation*}
\{\bar y\in \cl(E^i_{\bar x})\scht f_i(\bar y_{<i})<y_i\le g_i(\bar y_{<i})\}.
\end{equation*}

Thus, to prove \eqref{i1} for $i-1$ in this case it only remains to show that $F$ extends continuously onto the points where $y_i=f_i(\bar y_{<i})$.  By Lemma \ref{uniqueclosure}, we can restrict $D$ further so that for each $\bar x\in \pi_{<i}(D)$, the point $\icl(i,\bar x)$ is the unique point in $\cl(D)$ with first $i$ coordinates $\langle \bar x,f_i(\bar x)\rangle$ (note that $Q(i)=i$). For $\bar x\in D$, define $F(\icl(i,\bar x))=\lim_{y\to f_i(\bar x_{<i})^+}\mu(\bar x_{<i},y)$. We show that this is a continuous extension of $F\restrict E^{i-1}_{\bar x}$ for each $\bar x\in\pi_{<i}(D)$.  Fix $\bar a\in\pi_{<i}(D)$.  Let $\epsilon\in M$ be any positive element. Fix $\delta>f_i(\bar a)$ such that for $y\in(f_i(\bar a),\delta)$, we have $G(\bar a,y)<\epsilon/2$ and $|\mu(\bar a,y)-F(\icl(i,\bar a))|<\epsilon/2$.  Then for any $\bar b$ in the set $U=\{\bar x\in \cl(E^{i-1}_{\bar a})\scht x_i<\delta\}$, we have $|F(\bar b)-F(\icl(i,\bar a))|\le\epsilon/2+|\mu(\bar b)-F(\icl(i,\bar a)|<\epsilon$.  The set $U$ is open in $\cl(E^{i-1}_{\bar a})$ and contains $\icl(i,\bar a)$, so $F\restrict E^{i-1}_{\bar a}$ is continuous at $\icl(i,\bar a)$.  Thus, we have satisfied \eqref{i1} for $i-1$.

We must also satisfy condition \eqref{i2} for $i-1$, supposing $Q(i-1)>0$.  Let $G'$ be a good bound at $i-1$ with $G'\ge G$ guaranteed by Lemma \ref{goodbounddown} (we may shrink $D$ so that $D$ is a valid domain for $G'$).  Define
\begin{equation*}
S(\bar x,z)=\sup\left\{y : |\mu(\bar x,y)-F(\icl(i,\bar x))|<z\right\}.
\end{equation*}
Now replace our $i$-th coordinate boundary function, $g_i$, with $\min(g_i(\bar x),S(\bar x,G'(\bar x)))$.  We have then guaranteed that applying $F$ to any point will yield a value differing by less than $G'(\bar x)+G(\bar x)$ from $F$ applied to its $i$-closure point.

Given $\bar a,\bar a'\in D$ with $\bar a_{<i}=\bar a'_{<i}$, we have
\begin{multline*}
4G'(\bar a_{<i}) \ge
G(\bar a_{\le i})+G(\bar a'_{\le i})+|\mu(\bar a_{\le i})-F(\icl(i,\bar a_{<i}))|+|\mu(\bar a'_{\le i}))-F(\icl(i,\bar
a_{<i}))| \ge\\
|F(\bar a)-\mu(\bar a_{\le i})|+|F(\bar a')-\mu(\bar a'_{\le i})|+|\mu(\bar
a_{\le i})-\mu(\bar a'_{\le i})| \ge
|F(\bar a)-F(\bar a')|.
\end{multline*}
Thus, since $4G'$ is a good bound at $i-1$, we have satisfied \eqref{i2} for $i-1$ in the case that $\tp(c_i/\bar c_{<i})$ is principal.

\subsubsection*{Case 2: $\tp(c_i/\bar c_{<i})$ is non-principal}
Condition \eqref{i1} for $i-1$ is easily satisfied, since we chose $D$ satisfying Lemma \ref{onlyicl}. Thus, we know that $F$ is continuous on $\cl(E^{i-1}_{\bar x})$. Note that the argument in the principal case for satisfying condition \eqref{i2} does not {\it a priori} work: there is no guarantee that $c_i<S(\bar c_{<i},z)$, as $\tp(c_i/\bar c_{<i})$ is non-principal, so the interval $(f_i(\bar c_{<i}),S(\bar c_{<i},z))$ might not contain $c_i$.

For $\bar x,\bar x'$ with $\bar x_{<i}=\bar x'_{<i}$, if we can bound $|\mu(\bar x_{\le i})-\mu(\bar x'_{\le i})|$ by some good bound at $i-1$, we will be done by the triangle inequality.  We may restrict $D$ so that $\mu$ is monotonic in the $i$-th coordinate.  If $\mu$ is constant in the $i$-th coordinate, then we have certainly bounded $\mu$ as desired, so we may suppose not.

Let $N=\Pr(\bar c_{<Q(i)})$.  Now consider $\mu^{-1}_{\bar c_{<i}}$.  Since $\tp(c_i/\bar c_{<i})$ is out of scale on $N$, this implies that $\mu^{-1}_{\bar c_{<i}}(N)$ is neither cofinal nor coinitial at $c_i$ in $\Pr(\bar c_{<i})$.  We can thus replace $f_i$ and $g_i$ by $\emptyset$-definable functions such that, for $y_i\in [f_i(\bar c_{<i}),g_i(\bar c_{<i})]$, we have $\mu(\bar c_{<i},y_i)\notin N$, and thus $\tp(\mu(\bar
c_{<i},y_i)/N)=\tp(\mu(\bar c_{<i},y'_i)/N)$ for any $y_i,y'_i\in[f_i(\bar
c_{<i}),g_i(\bar c_{<i})]$, since for two elements to have different types over
$N$, there must be an element of $N$ between them.

\begin{claim}
If $b,b'\in [f_i(\bar c_{<i}),g_i(\bar c_{<i})]$, then $\tp(|\mu(\bar c_{<i},b)-\mu(\bar c_{<i},b')|/N)$ is principal above $0$.
\end{claim}
\begin{proof}
Note that, since $\mu$ is a bounded function (since $F$ is), it cannot be the
case that $\mu(\bar c_{<i},b)$ is principal near $\pm\infty$ over $N$.  By Lemma
\ref{outofscaledfbl}, since $\tp(c_j/Nc_{<j})$ is principal, algebraic, or out of scale on $P$ for every $j\in[Q(i),i]$, we have that $\tp(\bar c_{\le i}/N)$ is definable, and hence $N\langle\bar c_{\le i}\rangle$ realizes only principal types over $N$, so $\tp(\mu(\bar c_{<i},b)/N)$ is principal.  Then $\tp(|\mu(\bar c_{<i},b)-\mu(\bar c_{<i},b')|/N)$ is principal near $0$, since two elements in the same finite principal type are separated by an infinitesimal amount, relative to $N$.
\end{proof}

Thus, the type of
\begin{equation*}
\tilde\mu(\bar c_{<i})=\sup\{|\mu(\bar c_{<i},x_i)-\mu(\bar c_{<i},x'_i)| :
x_i,x'_i\in [f_i(\bar c_{<i}),g_i(\bar c_{<i})]\}
\end{equation*}
over $N$ is principal near $0$.  Note that $\tilde\mu$ is $\emptyset$-definable as a function of $\bar c_{<i}$.

By induction (on $n$), we know that $\tilde\mu$ is continuous on the closure of some $\emptyset$-definable set containing $\bar c_{<i}$.  Since $\tilde\mu(\bar c_{<i})$ is principal near $0$ over $N$, the function $\tilde\mu$ must extend to $\icl(i-1,\bar c)$ as $0$.  Thus we may restrict $D$ and suppose that for all $\bar x\in D$, we have $\tilde\mu(\icl(i-1,\bar x))=0$.  Thus, $\tilde\mu$ is a good bound at $i-1$, by definition.  Let $G'$ be the good bound at $i-1$ bounding $G$ guaranteed by Lemma \ref{goodbounddown}.  Since $\tilde\mu(\bar x_{<i})\ge|\mu(\bar x_{\le i})-\mu(\bar x'_{\le i})|$ when $\bar x_{<i}=\bar x'_{<i}$, we can now satisfy \eqref{i2} for $i-1$: given $\bar a,\bar a'$ with $\bar a_{<i}=\bar a'_{<i}$,
\begin{multline*}
|F(\bar a)-F(\bar a')|\le |F(\bar a)-\mu(\bar a_{\le i})|+|F(\bar a')-\mu(\bar a'_{\le
  i})|+|\mu(\bar a_{\le i})-\mu(\bar a'_{\le i})|\le\\ 2G'(\bar
a_{<i})+\tilde\mu(\bar a_{<i}),
\end{multline*}
and thus we are done.

We now prove that failure of \eqref{theoremitemtype} implies failure of \eqref{theoremitemfunction}, so fix $p$ a finite decreasing $n$-type over $A$ not satisfying \eqref{theoremitemtype} and $\bar c\models p$. Once again, we suppose that $p$ satisfies Condition \ref{uniquecond} for all $i$ such that $Q(i)>0$. Fix $i$ the first coordinate such that $\tp(\bar c_{\le i}/A)$ does not satisfy \eqref{theoremitemtype}.

As before, we may suppose $A=\emptyset$.  We will construct a $\emptyset$-definable $i$-ary function, extending it to be constant on the last $n-i$ coordinates, so we may suppose that $i=n$.  Let $k=Q(n)$. Note that $k>0$, since else $p$ satisfies \eqref{theoremitemtype} with $i_0=n$. Let $N=\Pr(\bar c_{<k})$.  By hypothesis, there is some $\bar c_{<n}$-definable function, $f_{\bar c_{<n}}$, such that $f_{\bar c_{<n}}(N)$ is cofinal or coinitial at $c_n$ in $\Pr(\bar c_{<n})$.  Without loss of generality, suppose it is coinitial.  We may suppose that $f$ is monotonic by restricting its domain, and actually suppose that its domain is a finite interval after applying a definable homeomorphism from $(0,1)$ to $N$.  Define $F(x_1,\ldots,x_n)=f^{-1}_{\bar x_{<n}}(x_n)$. Note that $F$ is bounded. Let $C$ be any $\emptyset$-definable set containing $\bar c$ on which $F$ is continuous.  Using Lemma \ref{uniqueclosure}, we replace $C$ by a $\emptyset$-definable subset such that $\cl(C)$ contains exactly one point with first $k$ coordinates $\langle \bar c_{<k},\alpha(\bar c_{<k})\rangle$, where $\alpha$ is the $\emptyset$-definable function above which $c_k$ is principal.  We may further suppose that $C$ is a cell.  Let $g_n$ be the function bounding the $n$-th coordinate of $C$ from above.  Since $f_{\bar c_{<n}}(N)$ is coinitial at $c_n$ in $\Pr(\bar c_{<n})$, there is some element, $r\in N$, such that $c_n<f_{\bar c_{<n}}(r)<g_n(\bar c_{<n})$.  By coinitiality, we can then find $r'\in N$ with $c_n<f_{\bar c_{<n}}(r')<f_{\bar c_{<n}}(r)$.

Since $F(\bar c_{<n},f_{\bar c_{<n}}(r))=r$, and $F(\bar c_{<n},f_{\bar c_{<n}}(r'))=r'$, we must have non-empty $\bar c_{<k}$-definable sets $D_1=\{\bar x\in C\scht F(\bar x)=r\}$ and $D_2=\{\bar x\in C\scht F(\bar x)=r'\}$. Note that $D_1,D_2$ are each non-open in their last coordinate.

Again by Lemma \ref{uniqueclosure}, applied to $\tp(c_k,\ldots,c_{n-1}/N)$ and each of $\pi_{<n}(D_1)$ and $\pi_{<n}(D_2)$, we may shrink $D_1$ and $D_2$, keeping $\bar c_{<n}\in\pi_{<n}(D_1)\cap\pi_{<n}(D_2)$, and then suppose that there is a unique point in each of $\cl(D_1)$, $\cl(D_2)$ with first $k$ coordinates $\langle \bar c_{<k},\alpha_k(\bar c_{<k})\rangle$. But since both $\cl(D_1)$ and $\cl(D_2)$ are subsets of $\cl(C)$, and $\cl(C)$ has a unique such point, there is a common point in $\cl(D_1)$ and $\cl(D_2)$.  Since $F=r$ on $D_1$, and $F=r'$ on $D_2$, $F$ cannot be extended continuously to this common point, $\icl(k,\bar c)$.  Now observe that $D_1,D_2,r,r'$ can all be regarded as parametrized sets and functions of $\bar c_{<k}$, and so $F$ cannot be extended continuously to $\icl(k,\bar x)$ for $\bar x$ in some open set containing $\bar c$.

Having established the Theorem \ref{maintheoremrestate} for decreasing independent types $p$ satisfying Condition \ref{uniquecond} for all $i$ with $Q(i)>0$, we show how to reduce the other cases to this one.

\begin{claim}\label{homeomorphism-prop}
Let $p$ and $q$ be types over $\emptyset$, contained in closed sets $B'$ and $B$, respectively, such that $f$ is a $\emptyset$-definable homeomorphism from $B'$ to $B$ with $f(p)=q$. Then \eqref{theoremitemfunction} holds for $p$ if and only if it holds for $q$.
\end{claim}
\begin{proof}
Since the situation is symmetric, we suppose that \eqref{theoremitemfunction} holds for $p$ and prove it holds for $q$. Let $\bar c\models p$. Let $F$ be any $\emptyset$-definable bounded function defined on $q$. Then $F\circ f$ is a $\emptyset$-definable function defined on $\bar c$. Applying \eqref{theoremitemfunction}, we can find a $\emptyset$-definable set $C$ containing $\bar c$ such that $F\circ f\restrict C$ is continuous and extends continuously to $\cl(C)$. Let $C'=f(C)\cap B$. Then $C'$ is a $\emptyset$-definable set containing $q$ such that $F\restrict C'$ is continuous. Moreover, since $f^{-1}(B)$ is defined, and $\cl(C')\subseteq B$, we know that $F\circ f$ extends continuously to $f^{-1}(\cl(C'))$, so $F\restrict C'$ extends continuously to $\cl(C')$, showing that \eqref{theoremitemfunction} holds for $q$.
\end{proof}

\begin{claim}\label{claimindependent}
Let $p$ be a decreasing $n$-type. There is $p'$ a decreasing independent type
satisfying each one of \eqref{theoremitemfunction} and \eqref{theoremitemtype}
if and only if $p$ does, and finite if $p$ is.
\end{claim}
\begin{proof}
Suppose that $p$ is not independent. Let $D$ be a closed $\emptyset$-definable set of lowest dimension containing $p$. Then the projection map $p_D$ defined in \cite{vdD98book}, Chapter 3, 2.7 is a homeomorphism from $D$ into $\mathcal C^{\dim(D)}$, and $D$ and $p_D(D)$ are the desired $B',B$ in Claim \ref{homeomorphism-prop}, so Claim \ref{homeomorphism-prop} gives equivalence of satisfaction of \eqref{theoremitemfunction}, and it is easy to see that $p_D$ preserves satisfaction of \eqref{theoremitemtype}.
\end{proof}

\begin{claim}\label{claimnoncutabove}
Let $p$ be a finite decreasing independent $n$-type. There is $p'$ a finite decreasing independent $n$-type such that $p'$ satisfies Condition \ref{uniquecond} for all $i\le n$ with $Q(i)>0$, and $p'$ satisfies each one of \eqref{theoremitemfunction} and \eqref{theoremitemtype} if and only if $p$ does.
\end{claim}
\begin{proof}
Let $\bar c\models p$. We modify $\bar c$ in stages.  At stage $i$, we suppose by induction that for each $k<i$ and $j\ge Q(i)$, the type $\tp(c_j/\bar c_{<Q(k)})$ is principal above some element of $\dcl(c_{<Q(k)})$.  Suppose
that for some $j\ge Q(i)$, the element $c_j$ is not principal above an element
of $\dcl(c_{<Q(i)})$.  By Lemma \ref{noncutsafterni}, $c_j$ is principal near some $\beta\in\dcl(c_{<Q(i)})$, so it is principal below $\beta$.  Let
$c'_j=\beta+(\beta-c_j)$, so $\tp(c'_j/\bar c_{<Q(i)})$ is principal above $\beta$.  For all $k<i$, we know that $c_j$ is principal above
an element of $\dcl(c_{<Q(k)})$, say $\beta'$.  Then $\beta$ is also principal
above $\beta'$, and so $c'_j$ is also principal above $\beta'$.  Hence replacing $c_j$ by $c'_j$ preserves the fact that $c_j$ is principal above an element of $\dcl(c_{<Q(k)})$ for all $k<i$.  We do this for each such $j$.  Then after $n$ stages, $\tp(\bar c)$ satisfies Condition \ref{uniquecond} for all $i\le n$ such that $Q(i)>0$.  It is easy to verify that the inverse of the composition of the functions applied to $\bar c$ in this process satisfies the conditions of Claim \ref{homeomorphism-prop}, and also preserves satisfaction of \eqref{theoremitemtype}.
\end{proof}

With Claims \ref{claimindependent} and \ref{claimnoncutabove}, we have shown that we lost no generality in restricting to considering independent types that satisfy Condition \ref{uniquecond} for $i$ such that $Q(i)>0$.
\long\def\janakqed{\textsquare}
\renewcommand\qedsymbol{{\it (Theorem \ref{maintheoremrestate})} \janakqed}
\end{proof}

In the general case, when $p$ is not a finite type, we need not restrict ourselves solely to bounded functions. Recall that a (possibly nonlinear) operator on topological vector spaces is called \emph{bounded} if the image of a bounded set is bounded. Note that if $F$ is bounded as a function, $F$ is {\it a fortiori} bounded as an operator.

\begin{corollary}\label{infinitetype}
Let $M$ be an o-minimal field, and let $A\subseteq M$.  Let $p$ be a decreasing $n$-type over $A$.  Let $\bar c=\langle c_1,\ldots,c_n\rangle\models p$.  Then the following statements are equivalent:
\begin{enumerate}
\item\label{coritemfunction} For every bounded (as an operator) $A$-definable $n$-ary function, $F$, defined on $\bar c$, there is an $A$-definable set $C$ with $\bar c\in C$, such that $F\restrict C$ is continuous and extends continuously to $\cl(C)$.
\item\label{coritemtype} There is $i_0\le n$ such that $\tp(c_i/A\bar c_{<i})$ is algebraic, principal, or out of scale on $A\bar c_{<Q(i)}$ for $i=i_0,\ldots,n$, and either $\tp(c_{i_0-1}/A)$ is non-principal, or for some $j\in[Q(i_0-1),i_0-1]$, we have $\tp(c_j/A\bar c_{Q(i_0-1)})$ principal near $\pm\infty$.
\end{enumerate}
\end{corollary}
\begin{proof}
If $p$ is finite, Corollary \ref{infinitetype} reduces to Theorem \ref{maintheoremrestate}. Thus, we may suppose that $p$ is not contained in any bounded definable set. If $p$ does not satisfy \eqref{coritemtype}, then it is easy to see that the proof that failure of \eqref{theoremitemtype} implies failure of \eqref{theoremitemfunction} in Theorem \ref{maintheoremrestate} works verbatim. It only remains to show that if $p$ is a decreasing $n$-type over $A$ satisfying \eqref{coritemtype} with $\tp(c_j/A\bar c_{Q(i_0-1)})$ principal near $\pm\infty$ for some $j\in[Q(i_0-1),i_0-1]$, then $p$ satisfies \eqref{coritemfunction}. Claim \ref{claimindependent} shows we can take $p$ to be independent, and Claim \ref{claimnoncutabove} can be suitably modified so that $p$ satisfies Condition \ref{uniquecond} for all $i\ge i_0$ such that $Q(i)>0$. Let $F$ be a bounded (as an operator) $A$-definable function defined on $\bar c$. By Lemma \ref{onlyicl}, given any $A$-definable set $C$ containing $\bar c$ with $F\restrict C$ continuous, we may take $C'\subseteq C$ such that $\cl(C')\setminus C\subseteq\bigcup_{i\le n}\icl(i,C')$. Note that if, for some $i$ and $j\ge Q(i)$, the type $\tp(c_j/A\bar c_{<Q(i)})$ is principal near $\pm\infty$, then $\icl(i,C')$ is not defined, so empty. Then the proof of Theorem \ref{maintheoremrestate} proceeds as before, until the coordinate $i_0-1$. Note that in the proof, we take limits of $F$ only in finite neighborhoods and suprema only along bounded coordinates, so these limits and suprema are still defined for an $F$ that is bounded as an operator. At stage $i_0-1$, $F$ has been continuously extended onto every point in $\cl(C')$, and so the proof finishes there, possibly after some further applications of Corollary 2.4 of Chapter 6 of \cite{vdD98book} to ensure continuity across fibers.
\end{proof}

\bibliography{../janak}

\begin{thebibliography}{vdDL95}

\bibitem[HL10]{HrLo11}
E.~Hrushovski and F.~Loeser.
\newblock Non-archimedean tame topology and stably dominated types, 2010,
  \href{http://xxx.lanl.gov/abs/1009.0252}{{\tt arXiv:1009.0252}}.
\newblock Preprint.

\bibitem[Mal74]{Malgrange74}
Bernard Malgrange.
\newblock Sur les points singuliers des \'equations diff\'erentielles.
\newblock {\em Enseign. Math.}, 20(2):147--176, 1974.

\bibitem[Mar86]{Marker86}
David Marker.
\newblock Omitting types in o-minimal theories.
\newblock {\em J. Symbolic Logic}, 51(1):63--74, March 1986.

\bibitem[Mil94]{Miller94}
Chris Miller.
\newblock Expansion of the real field with power functions.
\newblock {\em Ann. Pure Appl. Logic}, 68(1):79--94, June 1994.

\bibitem[MS94]{MaSt94}
David Marker and Charles Steinhorn.
\newblock Definable types in o-minimal theories.
\newblock {\em J. Symbolic Logic}, 59(1):185--197, March 1994.

\bibitem[Tre05]{Tressl05a}
Marcus Tressl.
\newblock Model completeness of o-minimal structures expanded by {D}edekind
  cuts.
\newblock {\em J. Symbolic Logic}, 70(1):29--60, March 2005.

\bibitem[vdD98]{vdD98book}
Lou van~den Dries.
\newblock {\em Tame topology and o-minimal structures}, volume 248 of {\em
  London Mathematical Society Lecture Note Series}.
\newblock Cambridge University Press, Cambridge, 1998.

\bibitem[vdDL95]{vdDLew95}
Lou van~den Dries and Adam~H. Lewenberg.
\newblock {$T$}-convexity and tame extensions.
\newblock {\em J. Symbolic Logic}, 60(1):74--102, 1995.

\end{thebibliography}
\bibliographystyle{alpha-elink}

\end{document}